\newtheorem{theorem}{Theorem}
\newtheorem{lemma}{Lemma}
\newtheorem{proposition}{Proposition}
\newtheorem{definition}{Definition}
\newtheorem{corollary}{Corollary}
\title{Lighthouse Principle for Diffusion\\ in Social Networks}
\author{Sanaz Azimipour \and Pavel Naumov}
\begin{document}

\maketitle

\begin{abstract}
The article investigates influence relation between two sets of agents in a social network. It proposes a logical system that captures propositional properties of this relation valid in all threshold models of social networks with the same topological structure. The logical system consists of Armstrong axioms for functional dependence and an additional Lighthouse axiom. The main results are soundness, completeness, and decidability theorems for this logical system. 
\end{abstract}

\section{Introduction}

In this article we study influence in social networks. When a new product is introduced to the market, it is usually first adopted by a few users that are called ``early adopters". These users might adopt the product because they are fans of the company introducing the product, as a result of the marketing campaign conducted by the company, or because they have a genuine need for this type of product. Once the early adopters start using the product, they put peer pressure on their friends and acquaintances in the social network, who might eventually follow them in adopting the product. The friends of the early adopters might eventually influence their own friends and so on, until the product is potentially adopted by a significant part of the network. 

A similar phenomenon could be observed with diffusion of certain behaviours, like smoking, adoption of new words and technical innovations, and propagation of beliefs.  

There are two most widely used models that formally capture diffusion process in social networks. One of them is the {\em cascading} model~\cite{snk08kiies,kkt05alp}. This model distinguishes active and inactive vertices of the network. Once a vertex $v$ becomes active, it gets a single chance to activate each neighbour $u$ with a given probability $p_{v,u}$. This process continues until no more activations can happen. 

% http://link.springer.com/chapter/10.1007/11523468_91

% http://link.springer.com/chapter/10.1007/978-3-540-85567-5_9

%http://en.wikipedia.org/wiki/Information_cascade

In this article we focus on the second model, called {\em threshold} model~\cite{v96sn,macy91asr,kkt03sigkdd,am14fi}, originally introduced by Granovetter~\cite{g78ajs} and Schelling~\cite{s78}. In this model each agent has a non-negative threshold value representing the agent's resistance to adoption of a given product. If the pressure from those peers of the agent who already adopted the product reaches the threshold value, then the agent also adopts the product. We assume that each of the other agents has a non-negative, but possibly zero, influence on the given agent. The peer pressure on an agent to adopt a product is the sum of influences on the agent of all agents who have already adopted the product. It is assumed in this model that, once the product is adopted, the agent keeps using the product and putting pressure on her peers indefinitely.

\begin{figure}[ht]
\begin{center}
%\vspace{3mm}
\scalebox{0.5}{\includegraphics{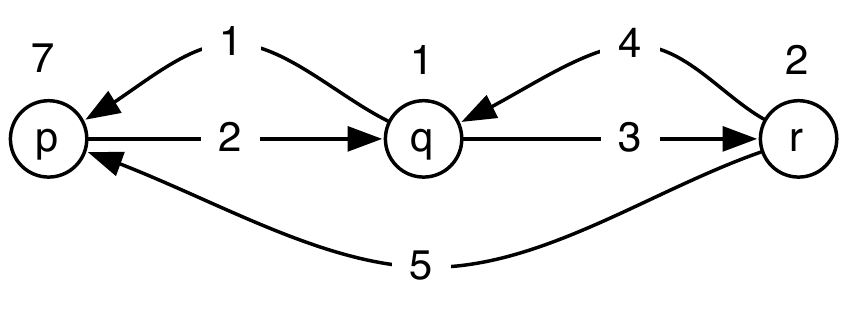}}
\vspace{0mm}
%\footnotesize
\caption{Social Network $N_1$
}\label{intro-example-1-fig}
\vspace{0cm}
\end{center}
%\vspace{-2mm}
\end{figure}

Consider, for example, social network $N_1$ depicted in Figure~\ref{intro-example-1-fig}. This network consists of three agents: $p$, $q$, and $r$ that have threshold values $7$, $1$, and $2$ respectively. The influence of one agent on another is shown in this figure by the label on the directed edge connecting the two agents. For instance, the influence of agent $r$ on agent $p$ is 5. If an agent has zero influence on another agent, then appropriate directed edge is not shown at all. Thus, influence of agent $p$ on agent $r$ is zero.

Suppose that a marketing company gives agent $p$ a free sample of the product and the agent starts using it. Since agent $p$ has influence $2$ on agent $q$ and threshold value of agent $q$ is only $1$, she will eventually also adopt the product. In turn, adoption of the product by agent $q$ will eventually lead to adoption of the product by agent $r$ because threshold value of agent $r$ is only $2$ and the influence of agent $q$ on agent $r$ is $3$. Thus, adoption of the product by agent $p$ eventually leads to adoption of this product by agent $r$. We denote this fact by $N_1\vDash p\rhd r$.

In this article we study relation $A\rhd B$ between group of agents $A$ and $B$ that could be informally described\footnote{We formally specify this relation in Definition~\ref{sat}.} as ``if all agents in set $A$ are given free samples of the product and they all start using it, then all agents in set $B$ will eventually adopt the product". For example, for the discussed above social network $N_1$, we have $N_1\vDash\{p\}\rhd\{q,r\}$, which we usually write as just $N_1\vDash p\rhd q,r$. 

At the same time, if a free sample of the product is given to agent $r$, then agent $q$ will eventually adopt it because she has threshold value $1$ and the influence of agent $r$ on her is $4$. Once agent $q$ adopts the product, however, the product diffusion stops and the product will never be adopted by agent $p$ because her threshold value is $7$ and the total peer pressure from agents $q$ and $r$ on $p$ will be only $1+5=6$. Therefore, for example, $N_1\vDash \neg (r\rhd p)$. 

The properties of relation $A\rhd B$ that we have discussed so far were specific to social network $N_1$. Let us now consider social network $N_2$ depicted in Figure~\ref{intro-example-2-fig}.
\begin{figure}[ht]
\begin{center}
%\vspace{3mm}
\scalebox{0.5}{\includegraphics{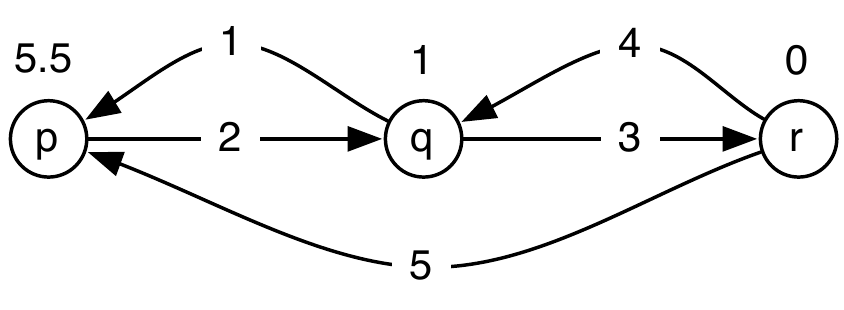}}
\vspace{0mm}
%\footnotesize
\caption{Social Network $N_2$
}\label{intro-example-2-fig}
\vspace{0cm}
\end{center}
%\vspace{-2mm}
\end{figure}
If a free sample of the product is given in network $N_2$ to agent $r$ and she starts using it, then, like it was for the network $N_1$, agent $q$ will eventually adopt the product because her threshold value is only $1$ and influence of agent $r$ on agent $q$ is $4$. Unlike network $N_1$, however, the product diffusion does not stop at this point because now total peer pressure of agents $q$ and $r$ on agent $p$ is still $1+5=6$, but the threshold value of agent $p$ in this network is only $5.5$. Thus, agent $p$ eventually will adopt the product. In other words, $N_2\vDash r\rhd p$. 

An interesting property of network $N_2$ is that agent $r$ has threshold value $0$. Thus, she will eventually adopt the product even if no free product samples are given to any of the agents: $N_2\vDash \varnothing\rhd r$. 

Note that social networks $N_1$ and $N_2$ are different only by the threshold values that the agents have. The agents in both networks have the same influence on each other. We will express this by saying that social networks $N_1$ and $N_2$ have the same {\em sociogram}. This common sociogram $S_1$ for networks $N_1$ and $N_2$ is depicted in Figure~\ref{example-1-fig}.

\begin{figure}[ht]
\begin{center}
%\vspace{3mm}
\scalebox{0.5}{\includegraphics{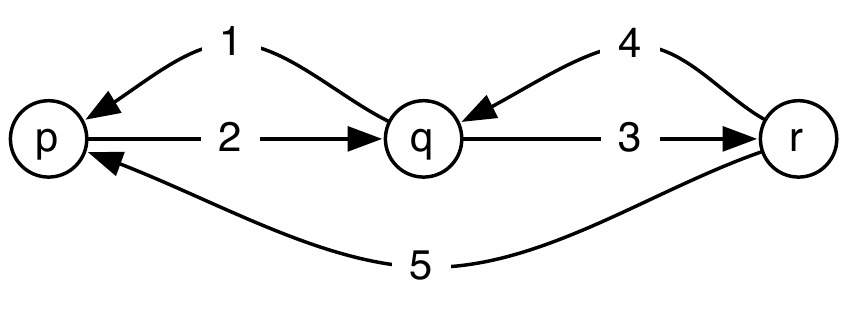}}
%\footnotesize
\caption{Sociogram $S_1$}\label{example-1-fig}
\vspace{0cm}
\end{center}
%\vspace{-2mm}
\end{figure}

To some degree, the threshold values characterize the relation that exists between the product and the individual agents and the sociogram describes the influence relation between the agents. The term sociogram has been first introduced by psychosociologist Jacob Levy Moreno~\cite{m34}.
% in 1934 and further developed in 1953 edition of the same book~\cite[p.141]{m53}.
The sociograms, as defined in this article, are directed weighted graphs. The original Moreno's sociograms were neither directed nor weighted. We  briefly discuss the unweighted sociograms in the conclusion. 

In this article we study not the individual properties of specific social networks, but the common properties of all social networks with the same sociogram. We write $S\vDash\phi$ if property $\phi$ is true for all social networks with sociogram $S$.  For example, as we show in Proposition~\ref{example-1},
\begin{equation}\label{intro claim}
S_1\vDash p\rhd r \to q\rhd r.
\end{equation}
In other words, under any assignment of threshold values on sociogram $S_1$, if giving a free sample of the product to agent $p$ will eventually lead to agent $r$ adopting the product, then giving a free sample of the product to agent $q$ would have the same effect.

The main result of this article is a complete axiomatization of propositional properties of relation $A\rhd B$ for any given sociogram. Such axiomatization consists of three axioms common to all sociograms and a sociogram-specific fourth  axiom. The first three axioms are
\begin{enumerate}
\item Reflexivity: $A\rhd B$ if $B\subseteq A$,
\item Transitivity: $A\rhd B \to (B\rhd C \to A\rhd C)$,
\item Augmentation: $A\rhd B\to (A,C\rhd B,C)$, 
\end{enumerate}
where $A,B$ denotes the union of sets $A$ and $B$. These axioms were originally proposed by Armstrong~\cite{a74} to describe functional dependence relation in database theory.  They became known in database literature as Armstrong's axioms \cite[p.~81]{guw09}. V{\"a}{\"a}n{\"a}nen proposed a first order version of these principles~\cite{v07} and their generalization for reasoning about approximate dependency~\cite{v14arxiv}. Beeri, Fagin, and Howard~\cite{bfh77} suggested a variation of Armstrong's axioms that describes properties of multi-valued dependence. Naumov and Nicholls~\cite{nn14jpl} proposed another variation of these axioms that describes rationally functional dependence.

The sociogram-dependent fourth axiom captures the fact that in every group of agents in which at least one agent eventually adopts the product there is always an agent (or a subgroup of agents) who adopts the product first. In marketing such agents are sometimes called {\em lighthouse customers}. In any given group of agents, the distinctive property of lighthouse customers is that they adopt the product without any peer pressure coming from other agents in this group. The lighthouse customers adopt the product as a result of the peer pressure from the outside of the group. Our fourth axiom postulates existence of lighthouse customers in any group of agents in which at least one agent eventually will adopt the product. Thus, we call this postulate {\em Lighthouse axiom}.

\begin{figure}[ht]
\begin{center}
%\vspace{3mm}
\scalebox{0.5}{\includegraphics{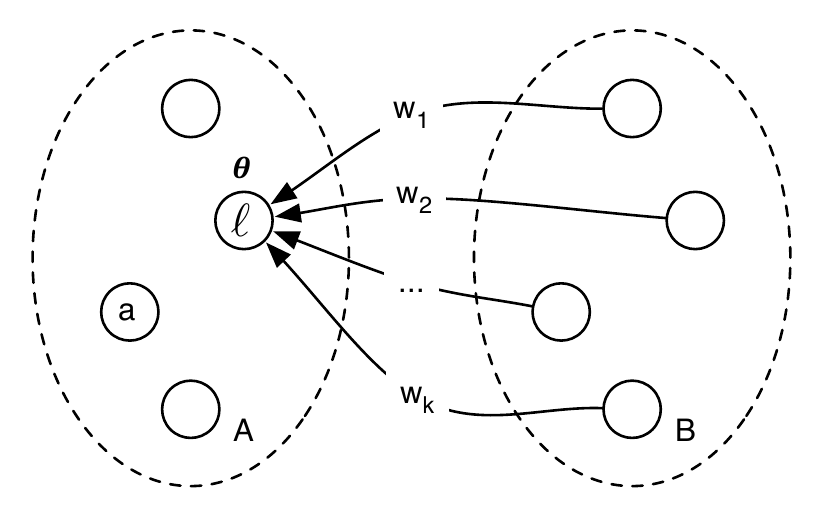}}
\vspace{0mm}
%\footnotesize
\caption{Lighthouse Axiom 
}\label{lighthouse-fig}
\vspace{0cm}
\end{center}
%\vspace{-2mm}
\end{figure}

One possible way to state Lighthouse axiom is to say that if all agents in network $N$  are partitioned into  disjoint sets $A$ and $B$, see Figure~\ref{lighthouse-fig}, and there is an agent $a\in A$ such that $N\vDash B\rhd a$, then there must exist a ``lighthouse" agent $\ell\in A$ such that the total peer pressure of all agents in set $B$ on agent $\ell$ is no less than the threshold value of agent $\ell$:
$$\theta\le w_1+w_2+\dots + w_k.$$
Unfortunately, when stated this way, Lighthouse axiom refers to threshold value $\theta$ of agent $\ell$. Thus, in this form, it is a property of the social network, rather than the corresponding sociogram. 

It turns out, however, that there is a way to re-word the axiom so that it does not refer to threshold values. Namely, let us assume that for every agent $a\in A$ we choose a set of agents $C_a\subseteq A\cup B$ such that peer pressure of set $C_a$ on agent $a$ is no less than peer pressure of set $B$ on agent $a$. The new form of Lighthouse axiom states that, under the above condition, if $N\vDash B\rhd a$, then there exists a ``lighthouse" agent $\ell\in A$ such that $N\vDash C_\ell\rhd \ell$. The main result of this article is the completeness theorem for logical system consisting of this form of Lighthouse axiom and the three Armstrong axioms.

Several logical frameworks for reasoning about diffusion in social networks have been studied before. Seligman, Liu, and Girard~\cite{slg11lia} proposed Facebook Logic for capturing properties of epistemic social networks in modal language, but did not give any axiomatization for this logic. They further developed this approach in papers~\cite{slg13tark,lsg14synthese}. In particular, they introduced dynamic friendship relations. Christoff and Hansen~\cite{ch15jal} simplified Seligman, Liu, and Girard setting and gave a complete axiomatization of the logical system for this new setting.
Christoff and Rendsvig proposed Minimal Threshold Influence Logic~\cite{cr14elisiem} that uses modal language to capture dynamic of diffusion in a threshold model and gave complete axiomatization of this logic. The languages of the described above systems are significantly different from ours and, as a result, neither of these systems  contains principles similar to our Lighthouse axiom.  

Diffusion in social networks is a special case of information flow on graphs. Logical systems for reasoning about various types of graph information flow has been studied before. Lighthouse axiom has certain resemblance with Gateway axiom for functional dependence on hypergraphs of secrets~\cite{mn11clima}, Contiguity axiom~\cite{hn13sr} for graphical games, and Shield Wall axiom for fault tolerance in belief formation networks~\cite{hn12jelia}.

This article is organized as following. In Section~\ref{syntax section} we introduce formal syntax and semantics of our logical system. Section~\ref{axioms section} list the four axioms of the system. In Section~\ref{examples section}, we give several examples of formal proofs in our system. In Section~\ref{star section} we show some auxiliary results that are used later. Section~\ref{soundness section} and Section~\ref{completeness section} prove soundness and completeness theorems respectively. Section~\ref{conclusion section} concludes with a discussion of logical properties of unweighted sociograms.

\section{Syntax and Semantics}\label{syntax section}

In this section we formally define social network, sociogram, and influence relation. 

\begin{definition}\label{Phi}
For any finite set $\mathcal{A}$, let $\Phi(\mathcal{A})$ be the minimal set of formulas such that
\begin{enumerate}
\item $\bot\in\Phi(\mathcal{A)}$,
\item $A\rhd B\in \Phi(\mathcal{A})$, for each subsets $A,B\subseteq \mathcal{A}$,
\item $\phi\to\psi\in \Phi(\mathcal{A})$ for each $\phi,\psi\in\Phi(\mathcal{A})$.
\end{enumerate}
\end{definition}
We assume that disjunction $\vee$ is defined through implication $\to$ and false constant $\bot$ in the standard way.

\begin{definition}\label{social network}
A social network is triple $(\mathcal{A},w,\theta)$, where
\begin{enumerate}
\item $\mathcal{A}$ is an arbitrary finite set (of agents),
\item $w$ is a function that maps $\mathcal{A}^2$ into non-negative real numbers. Value $w(a,b)$ represents influence of agent $a$ on agent $b$.
\item $\theta$ is a function that maps $\mathcal{A}$ into non-negative real numbers. Value $\theta(a)$ represents threshold value of agent $a\in\mathcal{A}$.
\end{enumerate}
\end{definition}

\begin{definition}\label{socigram}
A sociogram is pair $(\mathcal{A},w)$, where set $\mathcal{A}$ and function $w$ satisfy the first two conditions of Definition~\ref{social network}.
\end{definition}
We say that social network $(\mathcal{A},w,\theta)$ is based on sociogram $(\mathcal{A},w)$.
We now proceed to define peer pressure on an agent by a group of agents in a given sociogram. 
\begin{definition}\label{norm def}
For any sociogram $(\mathcal{A},w)$ and any subset of agents $A\subseteq\mathcal{A}$, let
$\|A\|_b=\sum_{a\in A}w(a,b)$.
\end{definition}

In the introduction we said that if, at some moment in time, an agent  experience peer pressure higher than her threshold value, then at some point in the future she will adopt the product. For the sake of simplicity, in our formal model we assume that time is discrete and that if at moment $k$ an agent experiences sufficient peer pressure, then she adopts the product at moment $k+1$. Although this assumption, generally speaking, affects the ``time dynamics" of product diffusion, it does not affect the final outcome of diffusion. Thus, this assumption, while simplifying the formal setting, does not change the properties of influence relation $A\rhd B$. Given this assumption, if free samples of the product are given to all agents in set $A$ at moment $0$, then by $A^k$ we mean the set of all agents who will adopt the product by moment $k$. The formal definition of $A^k$ is below.  

\begin{definition}\label{Ak}
For any $A\subseteq \mathcal{A}$ and any $k\in \mathbb N$, let subset $A^k\subseteq \mathcal{A}$ be defined recursively as follows:
\begin{enumerate}
\item $A^0=A$,
\item $A^{k+1}=A^k \cup \{x \in \mathcal{A}\;|\; \|A^k\|_x\ge \theta(x)\}$.
\end{enumerate}
\end{definition}

\begin{corollary}\label{add powers}
$(A^n)^k=A^{n+k}$.
\end{corollary}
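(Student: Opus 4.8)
The plan is to fix $n$ and proceed by induction on $k$, the outer exponent. The guiding observation is that the recursive clause of Definition~\ref{Ak} describes a single one-step update operator that depends only on the current set, not on how that set was reached. Writing $F(X)=X\cup\{x\in\mathcal{A}\mid \|X\|_x\ge\theta(x)\}$, the definition is exactly $B^0=B$ and $B^{m+1}=F(B^m)$ for every seed set $B$ and every $m\in\mathbb{N}$. So each superscript merely counts how many times $F$ has been applied to the seed, and the whole claim will reduce to the fact that applying $F$ repeatedly to $A^n$ is the same as continuing to apply $F$ to $A$ past step $n$.

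For the base case $k=0$, I would note that both sides collapse to $A^n$: the first clause of Definition~\ref{Ak}, read with the seed set $A^n$ in place of $A$, gives $(A^n)^0=A^n$, while $A^{n+0}=A^n$ holds trivially. For the inductive step I assume $(A^n)^k=A^{n+k}$ and apply $F$ to both sides. Using $B^{m+1}=F(B^m)$ on the left with seed $A^n$ and on the right with seed $A$ at level $n+k$, I get
\[
(A^n)^{k+1}=F\bigl((A^n)^k\bigr)=F\bigl(A^{n+k}\bigr)=A^{(n+k)+1}=A^{n+(k+1)},
\]
which closes the induction.

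I do not expect a genuine obstacle here; this is a routine consequence of the recursive definition once the one-step operator is recognized as history-independent. The only point that merits explicit care is the base case: one must observe that the clause $A^0=A$ applies verbatim with the arbitrary seed $A^n$ substituted for $A$, since that substitution is precisely what licenses $(A^n)^0=A^n$ and hence makes the exponents on the two sides line up throughout the induction.
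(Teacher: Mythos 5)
Your proof is correct and is precisely the routine argument the paper has in mind: the paper states this as a corollary of Definition~\ref{Ak} and omits the proof entirely, treating it as immediate from the fact that the recursion applies the same one-step operator regardless of the seed set. Your induction on $k$, with the base case $(A^n)^0=A^n$ and the inductive step unfolding one application of the update operator, is exactly the verification being left to the reader.
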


If free samples of the product are given to all agents in set $A$, then by $A^*$ we mean the set of all agents who will eventually adopt the product. The formal definition of $A^*$ is below.

\begin{definition}\label{A*}
$$A^*=\bigcup_{k\ge 0}A^k.$$
\end{definition}

The next definition specifies the formal semantics of our logical system. In particular, item 2 in this definition specifies the formal meaning of the influence relation.

\begin{definition}\label{sat}
For any social network 
$N=(\mathcal{A},w,\theta)$ 
and any 
$\phi\in \Phi(\mathcal{A})$, let satisfiability relation
$N\vDash\phi$ be defined as follows
\begin{enumerate}
\item $N\nvDash\bot$,
\item $N\vDash A\rhd B$ if $B\subseteq A^*$,
\item $N\vDash \psi\to\chi$ if $N\nvDash \psi$ or $N\vDash \chi$.
\end{enumerate}
\end{definition}

\section{Axioms} \label{axioms section}

Our logical system for an arbitrary sociogram $S=(\mathcal{A},w)$ consists of propositional tautologies in language $\Phi(\mathcal{A})$ and the following additional axioms:

\begin{enumerate}
\item Reflexivity: $A\rhd B$ if $B\subseteq A$,
\item Transitivity: $A\rhd B \to (B\rhd C \to A\rhd C)$,
\item Augmentation: $A\rhd B\to (A,C\rhd B,C)$, 
\item Lighthouse: if $A\sqcup B$ is a partition of the set of all agents $\mathcal{A}$ and $\{C_a\}_{a\in A}$ is a family of sets of agents such that $\|B\|_{a}\le \|C_a\|_{a}$ for each $a\in A$,
then
$$
\bigvee_{a\in A} B\rhd a  \to \bigvee_{a\in A} C_a\rhd a.
$$
\end{enumerate}

We write $\vdash_S\phi$ if formula $\phi$ can be derived in our system using Modus Ponens inference rule. We sometimes write just $\vdash\phi$ if the value of subscript $S$ is clear from the context. We also write $X \vdash_S\phi$ if formula $\phi$ could be derived in our system extended by the set of additional axioms $X$.

\section{Examples}\label{examples section}

In this section we give three examples of formal proofs in our logical system.
Soundness of this system is shown in Section~\ref{soundness section}. We start by proving statement~(\ref{intro claim}) from the introduction.

\begin{proposition}\label{example-1}
$\vdash_{S_1} p\rhd r\to q\rhd r$, where $S_1$ is the sociogram depicted in Figure~\ref{example-1-fig}.
\end{proposition}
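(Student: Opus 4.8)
The plan is to prove the equivalent statement $p\rhd r\vdash_{S_1} q\rhd r$ and then invoke the deduction theorem, which is available because the system contains all propositional tautologies and is closed under Modus Ponens. I would split the derivation into two stages: first, using only the three Armstrong axioms, strengthen the hypothesis $p\rhd r$ into $\{p,q\}\rhd r$; second, apply the Lighthouse axiom exactly once to pass from $\{p,q\}\rhd r$ to the desired $q\rhd r$.

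For the first stage, I would apply Augmentation to $p\rhd r$ with the extra set $\{q\}$, obtaining $\{p,q\}\rhd\{q,r\}$. Reflexivity gives $\{q,r\}\rhd r$ since $\{r\}\subseteq\{q,r\}$, and a single use of Transitivity then chains these two facts into $\{p,q\}\rhd r$. This stage is routine bookkeeping with the Armstrong axioms and should present no difficulty.

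The interesting stage is the second one, where I would instantiate Lighthouse with the partition $A=\{r\}$, $B=\{p,q\}$ of $\mathcal{A}=\{p,q,r\}$ together with the single set $C_r=\{q\}$. Reading the weights of $S_1$ off Figure~\ref{example-1-fig}, we have $w(p,r)=0$ and $w(q,r)=3$, so the side condition $\|B\|_r\le\|C_r\|_r$ required by the axiom becomes $0+3\le 3$, which holds with equality. Because both $A$ and the index set of the two disjunctions are the singleton $\{r\}$, the Lighthouse conclusion collapses to exactly $\{p,q\}\rhd r\to q\rhd r$, and one application of Modus Ponens to this and the output of the first stage finishes the argument.

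The main obstacle --- really the one genuine design choice --- is selecting the partition and the family $\{C_a\}_{a\in A}$ so that the weight-side condition is satisfied. The observation that makes $C_r=\{q\}$ admissible is that the only in-neighbour of $r$ with positive weight is $q$: the peer pressure $\|\{p,q\}\|_r$ of the whole set $B$ on $r$ coincides with the pressure $\|\{q\}\|_r$ of $\{q\}$ alone, since $p$ contributes nothing to $r$. Once this choice is made, everything else is forced, and I expect the propositional glue to be entirely mechanical.
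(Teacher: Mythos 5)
Your proposal is correct and takes essentially the same route as the paper: the heart of both arguments is the identical Lighthouse instantiation $A=\{r\}$, $B=\{p,q\}$, $C_r=\{q\}$, exploiting $w(p,r)=0$ so that $\|B\|_r=\|C_r\|_r$. Your first stage (Augmentation, then Reflexivity and Transitivity, wrapped in the deduction theorem) differs only cosmetically from the paper's derivation of $p\rhd r\to p,q\rhd r$ via Reflexivity, Transitivity, and Modus Ponens.
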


\begin{figure}[ht]
\begin{center}
%\vspace{3mm}
\scalebox{.5}{\includegraphics{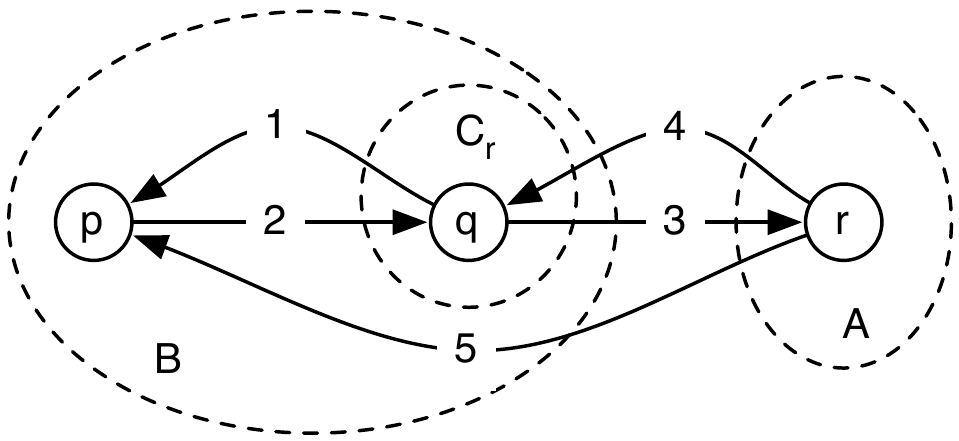}}
%\footnotesize
\caption{Towards Proof of Proposition~\ref{example-1}}\label{example-1-proof-fig}
\vspace{0cm}
\end{center}
%\vspace{-2mm}
\end{figure}

\begin{proof}
Let $A=\{r\}$, $B=\{p,q\}$, and $C_r=\{q\}$, see Figure~\ref{example-1-proof-fig}. Note that
$$\|B\|_r=w(p,r)+w(q,r)=0+3= 3 = w(q,r)=\|C_r\|_r.$$
Hence, by Lighthouse axiom,
\begin{equation}\label{example-1-eq}
\vdash p,q\rhd r \to q\rhd r.
\end{equation}
At the same time, by Transitivity axiom, 
$$
\vdash p,q\rhd p\to (p\rhd r\to p,q\rhd r).
$$
By Reflexivity axiom, $\vdash p,q\rhd p$. Thus, by Modus Ponens inference rule,
$$
\vdash p\rhd r\to p,q\rhd r.
$$
Therefore, $\vdash p\rhd r\to q\rhd r$ using statement (\ref{example-1-eq}) and propositional logic reasoning.
\end{proof}

\begin{figure}[ht]
\begin{center}
%\vspace{3mm}
\scalebox{.5}{\includegraphics{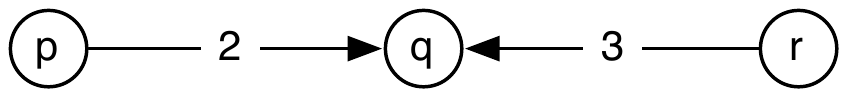}}
\vspace{5mm}
%\footnotesize
\caption{Sociogram $S_2$
}\label{example-2-fig}
\vspace{0cm}
\end{center}
%\vspace{-2mm}
\end{figure}

Let us now consider sociogram $S_2$ depicted on Figure~\ref{example-2-fig}. Since on this sociogram agent $r$ has higher influence on agent $q$ than agent $p$ has, one might expect the following statement to be true for all social networks over  sociogram $S_2$: 
\begin{equation}\label{pq to rq}
p\rhd q\to r\rhd q.
\end{equation}
Surprisingly, this is false. Namely, this statement is false for the social network depicted in Figure~\ref{example-2-note-fig}.
\begin{figure}[ht]
\begin{center}
%\vspace{3mm}
\scalebox{.5}{\includegraphics{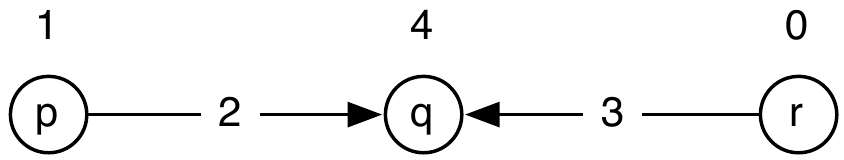}}
\vspace{5mm}
%\footnotesize
\caption{Social Network
}\label{example-2-note-fig}
\vspace{0cm}
\end{center}
%\vspace{-2mm}
\end{figure}
This happens because agent $r$ in this social network has threshold value $0$. In other words, agent $r$ is an ``early adopter" who does not need any external peer pressure in order to buy the product. As a result, see Figure~\ref{example-2-note-star-fig}, we have $\{p\}^1=\{p,r\}$. Once agent $r$ adopts the product, the total peer pressure on agent $q$ becomes $2+3=5$ and she will adopt the product as well. On the other hand, if the free sample is given to agent $r$, then neither agent $p$ nor agent $q$ ever adopt the product.

\begin{figure}[ht]
\begin{center}
%\vspace{3mm}
\scalebox{.5}{\includegraphics{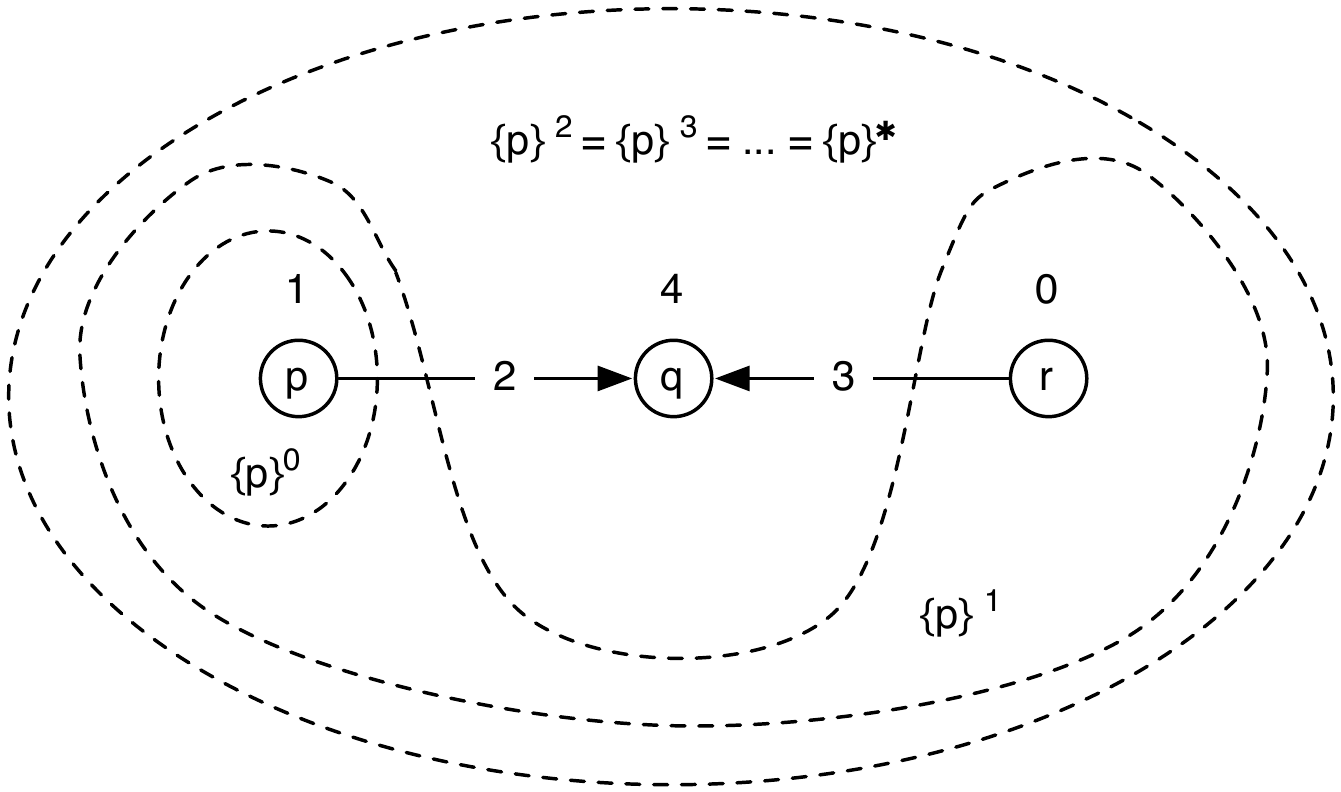}}
\vspace{5mm}
%\footnotesize
\caption{Social Network
}\label{example-2-note-star-fig}
\vspace{0cm}
\end{center}
%\vspace{-2mm}
\end{figure}
Although statement~(\ref{pq to rq}) holds not for all social networks over sociogram $S_2$, in the next proposition we show that a slightly modified version of this statement does hold for all such networks.

\begin{proposition}\label{example-2}
$\vdash_{S_2} p\rhd q\to (r\rhd q \vee \varnothing\rhd r)$, where $S_2$ is the sociogram depicted in Figure~\ref{example-2-fig}.
\end{proposition}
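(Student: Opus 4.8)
The plan is to obtain the desired implication from a single instance of the Lighthouse axiom, followed by purely propositional reasoning. The shape of the goal is instructive: the consequent is a disjunction $r\rhd q\vee\varnothing\rhd r$ whose disjuncts have the form $C\rhd x$ with $C=\{r\},\,x=q$ and $C=\varnothing,\,x=r$. Since the Lighthouse axiom produces exactly a disjunction $\bigvee_{a\in A}C_a\rhd a$, I would read off the partition $A\sqcup B$ and the witness family $\{C_a\}_{a\in A}$ directly from this target rather than from the hypothesis.

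Concretely, I would take $A=\{q,r\}$ and $B=\{p\}$, which is a partition of $\mathcal A=\{p,q,r\}$, and set $C_q=\{r\}$ and $C_r=\varnothing$. It then remains only to verify the two norm inequalities required by the axiom. Reading the weights off sociogram $S_2$, namely $w(p,q)=2$, $w(r,q)=3$, and $w(p,r)=0$, we obtain $\|B\|_q=w(p,q)=2\le 3=w(r,q)=\|C_q\|_q$ and $\|B\|_r=w(p,r)=0\le 0=\|\varnothing\|_r=\|C_r\|_r$. With the partition and witnesses meeting the hypotheses, the Lighthouse axiom yields
$$(p\rhd q\vee p\rhd r)\to(r\rhd q\vee\varnothing\rhd r).$$

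To finish, observe that $p\rhd q\to(p\rhd q\vee p\rhd r)$ is a propositional tautology, so chaining the two implications by Modus Ponens and propositional reasoning gives $p\rhd q\to(r\rhd q\vee\varnothing\rhd r)$, as claimed. The only genuinely creative step is the first one: guessing the partition $A=\{q,r\}$, $B=\{p\}$ together with the witnesses $C_q=\{r\}$ and $C_r=\varnothing$; everything afterward is a mechanical check of two weight inequalities and a one-line propositional chain. I expect the delicate condition to be $\|B\|_r\le\|C_r\|_r$, which admits $C_r=\varnothing$ only because $w(p,r)=0$ on $S_2$. This is precisely the formal trace of $r$ being an early adopter, and it is what allows the disjunct $\varnothing\rhd r$ to appear in the conclusion.
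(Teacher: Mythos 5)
Your proof is correct and follows essentially the same route as the paper: the same partition $A=\{q,r\}$, $B=\{p\}$ with witnesses $C_q=\{r\}$, $C_r=\varnothing$, the same weight checks, and the same final propositional weakening from the Lighthouse disjunction to the antecedent $p\rhd q$. In fact you spell out the last propositional step more carefully than the paper does.
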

\begin{proof}
Let $A=\{q,r\}$, $B=\{p\}$, $C_q=\{r\}$, and $C_r=\varnothing$, see Figure~\ref{example-2-proof-fig}.
\begin{figure}[ht]
\begin{center}
\vspace{3mm}
\scalebox{.5}{\includegraphics{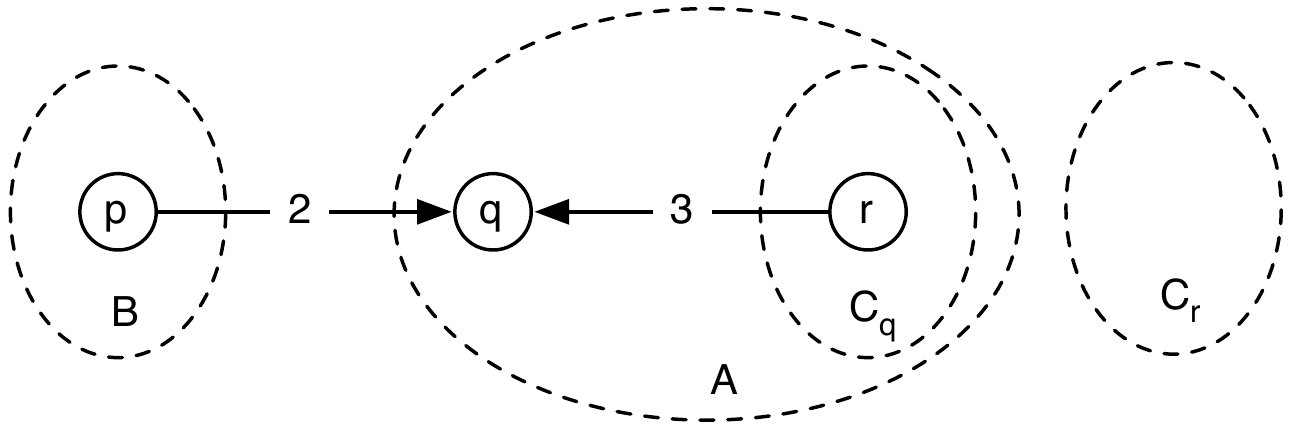}}
%\footnotesize
\caption{Towards Proof of Proposition~\ref{example-2}}\label{example-2-proof-fig}
\vspace{0cm}
\end{center}
\vspace{-2mm}
\end{figure}
Note that
$$
\|B\|_q=w(p,q)=2<3=w(r,q)=\|C_q\|_q
$$
and
$$
\|B\|_r=w(p,r)=0=\|\varnothing\|_r=\|C_r\|_r.
$$
Thus, by Lighthouse axiom,
$$
\vdash p\rhd q\vee p\rhd r\to r\rhd q \vee \varnothing \rhd r.
$$
Therefore,
$
\vdash p\rhd r\to r\rhd q \vee \varnothing \rhd r.
$
\end{proof}

\begin{figure}[ht]
\begin{center}
%\vspace{3mm}
\scalebox{.5}{\includegraphics{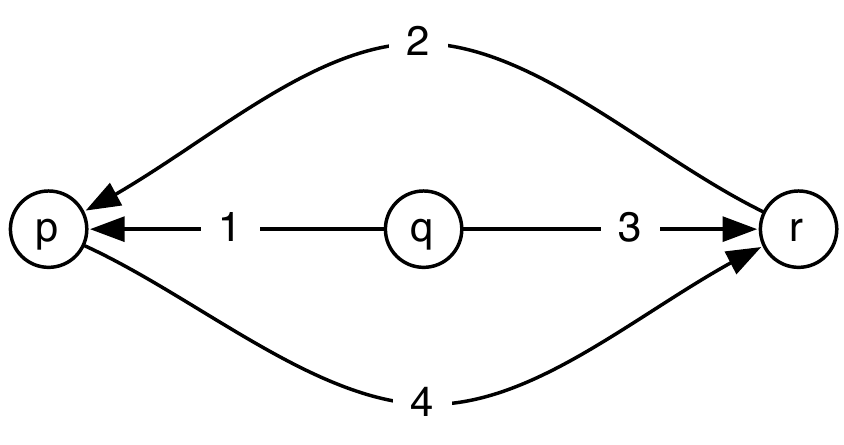}}
\vspace{5mm}
%\footnotesize
\caption{Sociogram $S_3$
}\label{example-3-fig}
\vspace{0cm}
\end{center}
%\vspace{-2mm}
\end{figure}

\begin{proposition}\label{example-3}
$\vdash_{S_3} q\rhd p \vee  q\rhd r\to p\rhd r \vee r\rhd p$, where $S_3$ is the sociogram depicted in Figure~\ref{example-3-fig}.
\end{proposition}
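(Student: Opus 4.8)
The plan is to obtain the implication as a single instance of the Lighthouse axiom, exactly mirroring the two previous examples. The conclusion $p \rhd r \vee r \rhd p$ already has the shape $\bigvee_{a \in A} C_a \rhd a$ once we read $r \rhd p$ as $C_p \rhd p$ with $C_p = \{r\}$ and $p \rhd r$ as $C_r \rhd r$ with $C_r = \{p\}$. This forces the choice $A = \{p, r\}$, and since the only remaining agent is $q$, the complementary block of the partition must be $B = \{q\}$. With this choice the hypothesis $q \rhd p \vee q \rhd r$ is literally $\bigvee_{a \in A} B \rhd a$, so the desired formula coincides with the implication delivered by the axiom (up to the order of the disjuncts).

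First I would fix $A = \{p, r\}$, $B = \{q\}$, $C_p = \{r\}$, and $C_r = \{p\}$, and check that $A \sqcup B$ is indeed a partition of $\mathcal{A} = \{p, q, r\}$. The substantive step is then to verify the two weight inequalities required by the Lighthouse axiom, namely
$$\|B\|_p = w(q, p) \le w(r, p) = \|C_p\|_p \quad\text{and}\quad \|B\|_r = w(q, r) \le w(p, r) = \|C_r\|_r,$$
both of which I expect to read off directly from the edge labels of sociogram $S_3$ in Figure~\ref{example-3-fig}. Once these hold, the Lighthouse axiom immediately yields $q \rhd p \vee q \rhd r \to r \rhd p \vee p \rhd r$, which is the claim modulo commutativity of $\vee$.

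The only real obstacle is the arithmetic comparison of influences: the whole argument hinges on $q$'s influence on each of $p$ and $r$ being dominated by, respectively, $r$'s influence on $p$ and $p$'s influence on $r$. If either inequality failed in $S_3$, one could not take the singleton sets $C_p = \{r\}$ and $C_r = \{p\}$ directly; instead one would have to enlarge $C_p$ or $C_r$ (possibly to include $q$ itself) to restore the hypothesis $\|B\|_a \le \|C_a\|_a$, and then reconcile the resulting stronger conclusion with the target formula using Reflexivity, Augmentation, and Transitivity, just as was done to pass from the raw Lighthouse output to the final statement in Proposition~\ref{example-1}. Assuming the labels in Figure~\ref{example-3-fig} satisfy the two displayed inequalities, however, no such repair is needed and the proof is essentially a one-line instance of the axiom.
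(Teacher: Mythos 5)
Your proposal is correct and matches the paper's proof exactly: the same choice $A=\{p,r\}$, $B=\{q\}$, $C_p=\{r\}$, $C_r=\{p\}$, and a single application of the Lighthouse axiom. The two inequalities you could not read off the figure do hold in $S_3$ (namely $\|B\|_p=w(q,p)=1<2=w(r,p)=\|C_p\|_p$ and $\|B\|_r=w(q,r)=3<4=w(p,r)=\|C_r\|_r$), so no repair step is needed.
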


\begin{figure}[ht]
\begin{center}
%\vspace{3mm}
\scalebox{.5}{\includegraphics{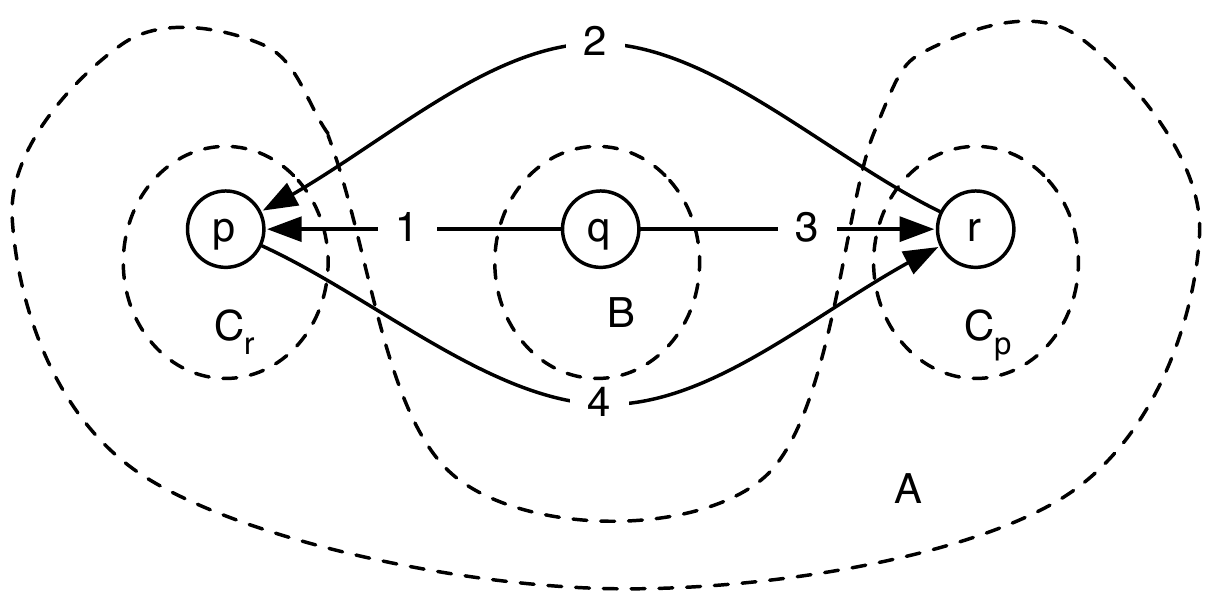}}
%\footnotesize
\caption{Towards Proof of Proposition~\ref{example-3}}\label{example-3-proof-fig}
\vspace{0cm}
\end{center}
%\vspace{-2mm}
\end{figure}

\begin{proof} 
Let $A=\{p,r\}$, $B=\{q\}$, $C_p=\{r\}$, and $C_r=\{p\}$,
see Figure~\ref{example-3-proof-fig}. Note that
$$
\|B\|_p=w(q,p)=1<2=w(r,p)=\|C_p\|_p
$$
and
$$
\|B\|_r=w(q,r)=3<4=w(p,r)=\|C_r\|_r.
$$
Therefore, by Lighthouse axiom,
$$
\vdash q\rhd p\vee q\rhd r\to p\rhd r \vee r \rhd p.
$$
\end{proof}

\section{Properties of Star Closure}\label{star section}

In this section we prove several technical properties of $A^*$ that are used later in the proofs of soundness and completeness.

\begin{lemma}\label{A1 imp Ak}
If $A^1=A$, then $A^k=A$ for each $k\ge 0$.
\end{lemma}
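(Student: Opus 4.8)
The plan is to prove this by induction on $k$. The statement asks that the hypothesis $A^1 = A$ propagates upward: the single-step stability of $A$ forces stability at every stage. The key structural observation is that the recursion in Definition~\ref{Ak} is \emph{memoryless}, in the sense that $A^{k+1}$ is obtained from $A^k$ by applying exactly the same one-step operator $X \mapsto X \cup \{x \mid \|X\|_x \ge \theta(x)\}$ that produces $A^1$ from $A^0 = A$. Once I recognize this, the induction becomes almost immediate.

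For the base cases, I would note that $A^0 = A$ holds by the first clause of Definition~\ref{Ak}, and $A^1 = A$ is the hypothesis of the lemma. For the inductive step, I would assume $A^k = A$ for some $k \ge 1$ and show $A^{k+1} = A$. Unfolding the recursive clause gives
$$
A^{k+1} = A^k \cup \{x \in \mathcal{A} \mid \|A^k\|_x \ge \theta(x)\}.
$$
Substituting the inductive hypothesis $A^k = A = A^0$ into the right-hand side yields exactly the expression defining $A^1$, namely $A^0 \cup \{x \mid \|A^0\|_x \ge \theta(x)\} = A^1$. Hence $A^{k+1} = A^1 = A$, where the last equality is the hypothesis, closing the induction.

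Since $A^{k+1}$ depends on the history only through $A^k$, no genuine obstacle arises: the entire content is the substitution $A^k = A$ that collapses the $(k+1)$-th step into the already-controlled first step. The only point deserving care is to make sure the induction is anchored correctly so that the substitution $A^k = A^0$ is legitimate, which is why I would treat $k=0$ and $k=1$ explicitly before running the step for $k \ge 1$.
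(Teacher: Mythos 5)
Your proof is correct, but it takes a slightly different route through the inductive step than the paper does. The paper peels off the \emph{first} application of the diffusion operator, using Corollary~\ref{add powers} to write $A^{k}=(A^1)^{k-1}$, then substitutes the hypothesis $A^1=A$ and applies the induction hypothesis to $A^{k-1}$. You instead peel off the \emph{last} application, unfolding Definition~\ref{Ak} directly: $A^{k+1}=A^k\cup\{x\in\mathcal{A}\mid \|A^k\|_x\ge\theta(x)\}$, substituting $A^k=A=A^0$ so that the right-hand side is literally the defining expression for $A^1$, which equals $A$ by hypothesis. The trade-off: your argument is fully self-contained, needing nothing beyond Definition~\ref{Ak} and the observation that the recursion iterates a single one-step operator; the paper's argument is a one-line chain of equalities but leans on the power-addition corollary (which it states without proof and reuses later, e.g.\ in Lemma~\ref{A**}). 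Your separate treatment of $k=1$ as a base case is harmless but not needed --- the step from $k$ to $k+1$ as you wrote it already works for $k=0$, since substituting $A^0=A$ just reproduces the hypothesis $A^1=A$.
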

\begin{proof}
We prove this lemma by induction on $k$. If $k=0$, then $A^0=A$ by Definition~\ref{Ak}. If $k>0$, then by Corollary~\ref{add powers}, assumption $A^1=A$, and the induction hypothesis,
$$
A^{k}=(A^1)^{k-1}=A^{k-1}=A.
$$
\end{proof}

\begin{lemma}\label{A*=Ak}
$A^*=A^k$ for some $k\ge 0$.
\end{lemma}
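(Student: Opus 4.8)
The plan is to exploit the monotonicity of the sequence $\{A^k\}_{k\ge 0}$ together with the finiteness of $\mathcal{A}$ to conclude that the sequence stabilizes, and then to propagate that stabilization to all larger indices using the two results already available, Corollary~\ref{add powers} and Lemma~\ref{A1 imp Ak}.

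First I would observe that the sequence is nondecreasing: by item~2 of Definition~\ref{Ak} we have $A^k\subseteq A^{k+1}$ for every $k$, since $A^{k+1}$ is defined as $A^k$ together with some additional agents. Consequently the cardinalities $|A^0|\le |A^1|\le |A^2|\le\cdots$ form a nondecreasing sequence of nonnegative integers, all bounded above by $|\mathcal{A}|$, which is finite by item~1 of Definition~\ref{social network}. A nondecreasing integer sequence bounded above cannot increase strictly forever, so there must exist some $k\ge 0$ with $|A^{k+1}|=|A^k|$. Combined with $A^k\subseteq A^{k+1}$, this yields $A^{k+1}=A^k$.

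Next I would show that this single coincidence forces the sequence to be constant from $k$ onward. Set $B=A^k$. By Corollary~\ref{add powers} we have $B^1=(A^k)^1=A^{k+1}=A^k=B$, so the hypothesis of Lemma~\ref{A1 imp Ak} holds for $B$. That lemma then gives $B^j=B$ for every $j\ge 0$. Applying Corollary~\ref{add powers} once more, $A^{k+j}=(A^k)^j=B^j=B=A^k$ for all $j\ge 0$, so $A^m=A^k$ for every $m\ge k$.

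Finally I would assemble these facts into the claim about $A^*$. By Definition~\ref{A*}, $A^*=\bigcup_{m\ge 0}A^m$. For $m\le k$ monotonicity gives $A^m\subseteq A^k$, and for $m\ge k$ the previous step gives $A^m=A^k$; hence every term of the union is contained in $A^k$, while $A^k$ itself is one of the terms, so the union equals $A^k$. This establishes $A^*=A^k$ for this particular $k$. The only genuinely substantive step is the stabilization argument — recognizing that the bounded monotone set sequence must repeat — and the main convenience is that Lemma~\ref{A1 imp Ak} and Corollary~\ref{add powers} let me upgrade a single repetition $A^{k+1}=A^k$ into permanence without any further case analysis.
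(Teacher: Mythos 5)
Your proof is correct and follows essentially the same route as the paper, which simply asserts that the lemma ``follows from the assumption that $\mathcal{A}$ is finite'' without spelling out the details. You have filled in exactly the argument the paper leaves to the reader---monotonicity of the sequence $\{A^k\}_{k\ge 0}$, stabilization forced by finiteness, and propagation of the repetition via Corollary~\ref{add powers} and Lemma~\ref{A1 imp Ak}---so there is no substantive difference in approach.
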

\begin{proof}
The statement of the lemma follows from the assumption in Definition~\ref{social network} that set $\mathcal{A}$ is finite.
\end{proof}

\begin{lemma}\label{theta>}
If $x\notin A^*$, then $\theta(x)>\|A^*\|_x$, for each subset $A\subseteq \mathcal{A}$ and each agent $x\in \mathcal{A}$.
\end{lemma}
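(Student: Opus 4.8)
The plan is to exploit the fact that the ascending sequence $A^0 \subseteq A^1 \subseteq A^2 \subseteq \cdots$ eventually stabilizes, and that once it stabilizes no agent $y$ can satisfy $\|A^k\|_y \ge \theta(y)$ without $y$ already belonging to the closure. First I would invoke Lemma~\ref{A*=Ak} to fix some $k \ge 0$ with $A^* = A^k$. The key point to establish is that the process has genuinely converged at stage $k$, in the sense that $A^{k+1} = A^k$, not merely that the set $A^k$ happens to equal the union $A^*$.

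To see that $A^{k+1} = A^k$, I would combine two inclusions. On one hand, Definition~\ref{Ak} writes $A^{k+1}$ as a union containing $A^k$, so $A^k \subseteq A^{k+1}$. On the other hand, $A^{k+1} \subseteq \bigcup_{n\ge 0} A^n = A^*$ by Definition~\ref{A*}. Since $A^* = A^k$ by the choice of $k$, the chain of inclusions $A^k \subseteq A^{k+1} \subseteq A^* = A^k$ forces $A^{k+1} = A^k = A^*$.

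With this in hand the conclusion is immediate from the contrapositive reading of the recursion. Assume $x \notin A^*$. Then $x \notin A^{k+1}$, and by the description of $A^{k+1}$ in Definition~\ref{Ak} as $A^k \cup \{y \in \mathcal{A} \mid \|A^k\|_y \ge \theta(y)\}$, the agent $x$ lies in neither component; in particular $x$ is not in the second set, so $\|A^k\|_x < \theta(x)$. Substituting $A^* = A^k$ (which also gives $\|A^*\|_x = \|A^k\|_x$ directly from Definition~\ref{norm def}) yields $\theta(x) > \|A^*\|_x$, as required.

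I do not expect a serious obstacle here; the lemma is essentially a fixed-point observation. The one step deserving care is the justification that $A^{k+1} = A^k$: it is tempting to read Lemma~\ref{A*=Ak} as already asserting convergence, but what it provides is only the set equality $A^* = A^k$, and I would want to spell out the short inclusion argument above so that the transition from ``$x$ is not added at stage $k+1$'' to ``$\|A^k\|_x < \theta(x)$'' is unambiguous.
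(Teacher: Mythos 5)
Your proposal is correct and follows essentially the same route as the paper: both fix $k$ with $A^* = A^k$ via Lemma~\ref{A*=Ak} and then use Definition~\ref{Ak} together with the inclusion $A^{k+1} \subseteq A^*$ to conclude; the paper phrases this as a contradiction (assuming $\|A^*\|_x \ge \theta(x)$ forces $x \in A^{k+1} \subseteq A^*$), while you phrase it as the contrapositive, with the explicit fixed-point observation $A^{k+1} = A^k$ as a minor, harmless elaboration.
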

\begin{proof}
By Lemma~\ref{A*=Ak}, there is $k\ge 0$ such that $A^*=A^k$. Suppose that $\|A^*\|_x\ge \theta(x)$. Thus, $\|A^k\|_x\ge \theta(x)$. Hence, $x\in A^{k+1}$, by Definition~\ref{Ak}. Thus, $x\in A^*$ by Definition~\ref{A*}, which is a contradiction to the assumption of the lemma.
\end{proof}

\begin{lemma}\label{A subseteq A*}
$A\subseteq A^*$.
\end{lemma}
\begin{proof}
By Definition~\ref{Ak} and Definition~\ref{A*},
$$
A=A^0\subseteq \bigcup_{k\ge 0}A^k=A^*.
$$
\end{proof}

\begin{lemma}\label{A**}
$(A^*)^*\subseteq A^*$.
\end{lemma}
\begin{proof}
By Lemma~\ref{A*=Ak}, there are $n,k\ge 0$ such that $A^*=A^n$ and $(A^*)^*=(A^*)^k$. Thus, by Corollary~\ref{add powers} and Definition~\ref{A*},
$$
(A^*)^*=(A^*)^k=(A^n)^k=A^{n+k}\subseteq \bigcup_{m\ge 0} A^m = A^*.
$$
\end{proof}

\begin{lemma}\label{subset Ak}
If $A\subseteq B$, then $A^k\subseteq B^k$, for each $k\ge 0$.
\end{lemma}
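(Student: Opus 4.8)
The plan is to prove the statement by induction on $k$, the only real ingredient being the monotonicity of the peer pressure function $\|\cdot\|_x$ with respect to set inclusion. For the base case $k=0$, Definition~\ref{Ak} gives $A^0=A$ and $B^0=B$, so the hypothesis $A\subseteq B$ immediately yields $A^0\subseteq B^0$.

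Before handling the inductive step, I would record the key auxiliary fact: the peer pressure function is monotone. Indeed, whenever $X\subseteq Y$, Definition~\ref{norm def} together with the non-negativity of the weight function $w$ (guaranteed by Definition~\ref{social network}) gives
$$
\|X\|_x=\sum_{a\in X}w(a,x)\le \sum_{a\in Y}w(a,x)=\|Y\|_x
$$
for every agent $x\in\mathcal{A}$. This is the one place where non-negativity of $w$ is used, and it is exactly what makes the lemma go through.

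For the inductive step, assume $A^k\subseteq B^k$. I would unfold the recursive clause of Definition~\ref{Ak} for both $A^{k+1}$ and $B^{k+1}$. The ``old'' part $A^k$ is contained in $B^k\subseteq B^{k+1}$ by the induction hypothesis (the second inclusion being immediate from the definition of $B^{k+1}$ as a union containing $B^k$). For the ``new'' part, take any $x$ with $\|A^k\|_x\ge\theta(x)$; applying the monotonicity fact to $A^k\subseteq B^k$ gives $\|B^k\|_x\ge\|A^k\|_x\ge\theta(x)$, so $x\in B^{k+1}$ by Definition~\ref{Ak}. Combining the two observations yields $A^{k+1}\subseteq B^{k+1}$, completing the induction.

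I do not anticipate any genuine obstacle here: the entire argument is a routine induction, and the only substantive step is the monotonicity of $\|\cdot\|_x$, which follows directly from the non-negativity of the weights. The statement is really a lemma of convenience, packaging this monotonicity for use in the soundness and completeness arguments.
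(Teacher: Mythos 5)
Your proof is correct and follows essentially the same route as the paper's: induction on $k$, splitting $A^{k+1}$ into the part already in $A^k$ (handled by the induction hypothesis) and the part satisfying $\|A^k\|_x\ge\theta(x)$ (handled by monotonicity of $\|\cdot\|_x$, which the paper establishes inline via the same sum comparison using non-negativity of $w$). The only cosmetic difference is that you isolate the monotonicity of the peer-pressure function as an explicit auxiliary fact, whereas the paper folds it into the inductive step.
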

\begin{proof}
We prove the statement of the lemma by induction on $k$. If $k=0$, then $A^0=A\subseteq B=B^0$ by Definition~\ref{Ak}.

Suppose that $A^k\subseteq B^k$. Let $x\in A^{k+1}$. It suffices to show that $x\in B^{k+1}$. Indeed, by Definition~\ref{Ak}, assumption $x\in A^{k+1}$ implies that either $x\in A^{k}$ or $\|A^{k}\|_x\ge \theta(x)$. In the first case, by the induction hypothesis, $x\in A^{k}\subseteq B^{k}$. Thus, $x\in B^{k}$. Therefore, $x\in B^{k+1}$ by Definition~\ref{Ak}.

In the second case, by Definition~\ref{norm def} and assumption $A^k\subseteq B^k$,
$$
\|B^k\|_x=\sum_{b\in B^k} w(b,x)\ge \sum_{a\in A^k} w(a,x) = \|A^k\|_x\ge \theta(x).
$$
Therefore, $x\in B^{k+1}$ by Definition~\ref{Ak}.
\end{proof}

\begin{corollary}\label{star mono}
If $A\subseteq B$, then $A^*\subseteq B^*$.
\end{corollary}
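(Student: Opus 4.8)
The plan is to derive this directly from Lemma~\ref{subset Ak} together with the definition of the star closure as a union of the finite-stage closures. Since the statement is a corollary, I expect no genuine obstacle: the entire content is already packed into Lemma~\ref{subset Ak}, and what remains is only to lift the stagewise inclusion $A^k\subseteq B^k$ to an inclusion of the two unions.

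Concretely, I would first invoke the hypothesis $A\subseteq B$ to apply Lemma~\ref{subset Ak}, yielding $A^k\subseteq B^k$ for every $k\ge 0$. I would then observe that monotonicity of union with respect to its member sets gives
$$
A^*=\bigcup_{k\ge 0}A^k\subseteq \bigcup_{k\ge 0}B^k=B^*,
$$
where the two outer equalities are just Definition~\ref{A*} applied to $A$ and to $B$ respectively, and the middle inclusion follows because each term of the left-hand union is contained in the corresponding term of the right-hand union.

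There is essentially no hard step here; the only thing to be careful about is that the union inclusion is justified termwise rather than by some delicate interleaving of stages. If one wanted an even more elementary phrasing avoiding the union-monotonicity remark, one could instead argue pointwise: take any $x\in A^*$, use Definition~\ref{A*} to fix a stage $k$ with $x\in A^k$, apply Lemma~\ref{subset Ak} to conclude $x\in B^k$, and then appeal again to Definition~\ref{A*} to place $x\in B^*$. Either route closes the argument immediately.
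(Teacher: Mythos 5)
Your proof is correct and matches the paper's intent exactly: Corollary~\ref{star mono} is stated without proof precisely because it follows immediately from Lemma~\ref{subset Ak} and Definition~\ref{A*} by taking unions over $k$, which is exactly your argument. Both of your phrasings (termwise union monotonicity or the pointwise version) are fine.
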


\begin{lemma}\label{star union}
$A^*\cup B^*\subseteq (A\cup B)^*$.
\end{lemma}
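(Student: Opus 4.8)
The plan is to derive this inclusion directly from the monotonicity of star closure, which has already been established as Corollary~\ref{star mono}. There is no need to unfold the recursive Definition~\ref{Ak} again, since all of that work was already absorbed into Lemma~\ref{subset Ak} and its corollary.

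First I would observe that $A\subseteq A\cup B$ and $B\subseteq A\cup B$ hold trivially from the definition of set union. Applying Corollary~\ref{star mono} to the first of these gives $A^*\subseteq (A\cup B)^*$, and applying it to the second gives $B^*\subseteq (A\cup B)^*$. Since both $A^*$ and $B^*$ are contained in the single set $(A\cup B)^*$, their union is as well, yielding $A^*\cup B^*\subseteq (A\cup B)^*$.

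I do not expect any genuine obstacle here: the statement is an immediate two-line consequence of monotonicity. The only conceptual point worth keeping in mind is that the inclusion cannot in general be strengthened to equality, since the set $A^*\cup B^*$ need not be closed under the diffusion step of Definition~\ref{Ak}. Once agents adopted from both $A^*$ and $B^*$ are present simultaneously, their combined peer pressure $\|A^*\cup B^*\|_x$ may reach the threshold $\theta(x)$ of some new agent $x$ that lies outside both $A^*$ and $B^*$ individually, so $(A\cup B)^*$ can be strictly larger. This is precisely why the lemma is stated as a one-sided inclusion, and the short proof above establishes exactly that direction.
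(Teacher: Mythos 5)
Your proof is correct and is essentially identical to the paper's own argument: both derive $A^*\subseteq (A\cup B)^*$ and $B^*\subseteq (A\cup B)^*$ from Corollary~\ref{star mono} applied to the trivial inclusions $A\subseteq A\cup B$ and $B\subseteq A\cup B$, and then take the union. Your closing remark about why the inclusion can be strict is a nice (and accurate) observation, though not needed for the lemma itself.
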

\begin{proof}
Note that $A\subseteq A\cup B$ and $B\subseteq A\cup B$. Thus, $A^*\subseteq (A\cup B)^*$ and $B^*\subseteq (A\cup B)^*$ by Corollary~\ref{star mono}. Therefore, $A^*\cup B^*\subseteq (A\cup B)^*$.
\end{proof}

\section{Soundness}\label{soundness section}

In this section we prove the soundness of our logical system with respect to the semantics given in Definition~\ref{sat}. The soundness of propositional tautologies and Modus Ponens inference rule is straightforward. Below we show the soundness of each of the remaining four axioms as separate lemmas. In the lemmas that follow we assume that $S=(\mathcal{A},w,\theta)$ is a social network and $A$, $B$, and $C$ are subsets of $\mathcal{A}$.

\begin{lemma}
If $B\subseteq A$, then $S\vDash A\rhd B$.
\end{lemma}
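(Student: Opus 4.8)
The plan is to unfold the definition of the satisfiability relation and reduce the claim to a purely set-theoretic containment. By item~2 of Definition~\ref{sat}, establishing $S\vDash A\rhd B$ amounts to showing that $B\subseteq A^*$. Since we are given the hypothesis $B\subseteq A$, the natural strategy is to chain this hypothesis together with the fact that every set is contained in its own star closure.

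Concretely, I would first invoke Lemma~\ref{A subseteq A*}, which states $A\subseteq A^*$. Combining the given hypothesis $B\subseteq A$ with this lemma yields $B\subseteq A\subseteq A^*$, and hence $B\subseteq A^*$ by transitivity of set inclusion. By item~2 of Definition~\ref{sat}, this containment is precisely the condition for $S\vDash A\rhd B$, which completes the argument.

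I do not anticipate any genuine obstacle here: the statement is the soundness of the Reflexivity axiom, and its proof is a one-line consequence of the definitions together with the already-established monotonicity-type Lemma~\ref{A subseteq A*}. The only point requiring any care is simply to keep straight which direction of inclusion is needed, namely that the \emph{conclusion} set $B$ lands inside the star closure of the \emph{hypothesis} set $A$. No induction, no appeal to the threshold dynamics, and no use of the weight function $w$ is required beyond what is already packaged inside $A^*$.
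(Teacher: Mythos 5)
Your proposal is correct and is essentially identical to the paper's own proof: both reduce $S\vDash A\rhd B$ to the containment $B\subseteq A^*$ via item~2 of Definition~\ref{sat}, and both obtain it by chaining the hypothesis $B\subseteq A$ with Lemma~\ref{A subseteq A*}. No gaps.
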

\begin{proof}
By Lemma~\ref{A subseteq A*}, $A\subseteq A^*$. Thus, $B\subseteq A^*$ by the assumption of the lemma. Therefore, $S\vDash A\rhd B$, by Definition~\ref{sat}.
\end{proof}

\begin{lemma}  
If $ S \vDash A \rhd B $ and $  S \vDash B \rhd C $, then $ S \vDash A \rhd C $. 
\end{lemma}
\begin{proof}
By Definition~\ref{sat}, assumption $S \vDash A \rhd B$ implies that $B\subseteq A^*$. Hence, $B^*\subseteq (A^*)^*$ by Corollary~\ref{star mono}. Thus, $B^*\subseteq A^*$ by Lemma~\ref{A**}. At the same time, $C\subseteq B^*$ by assumption $S \vDash B \rhd C$ and Definition~\ref{sat}. Thus, $C\subseteq A^*$. Therefore, $S \vDash A \rhd C$ by Definition~\ref{sat}.
\end{proof}

\begin{lemma}  
If $ S \vDash A \rhd B $, then $ S \vDash A,C \rhd B,C $. 
\end{lemma}
\begin{proof} 
Suppose that $ S \vDash A \rhd B $. Thus, $B\subseteq A^*$ by Definition~\ref{sat}. Note that $C\subseteq C^*$ by Lemma~\ref{A subseteq A*}. Thus, 
$$
B\cup C \subseteq A^* \cup C^* \subseteq (A\cup C)^*,
$$
by Lemma~\ref{star union}. Therefore, $ S \vDash A,C \rhd B,C $, by Definition~\ref{sat}.
\end{proof}

\begin{lemma}
If $S\vDash B\rhd a_0$ for some $a_0\in A$, then there is $\ell\in A$ such that $S\vDash C_{\ell}\rhd \ell$, where $A\sqcup B$ is a partition of the set of all agents $\mathcal{A}$ and $\{C_a\}_{a\in A}$ is a family of sets of agents such that $\|B\|_{a}\le \|C_a\|_{a}$ for each $a\in A$.
\end{lemma}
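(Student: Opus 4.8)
The plan is to identify the lighthouse agent $\ell$ as the first agent of $A$ to adopt the product during the diffusion process seeded by $B$. Starting from the assumption $S\vDash B\rhd a_0$, Definition~\ref{sat} gives $a_0\in B^*$; since $a_0\in A$ while $B^0=B$ is disjoint from $A$, the agent $a_0$ must be adopted at some strictly positive stage of the cascade, so an agent of $A$ enters the closure only after a nonempty chain of adoptions from $B$.

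First I would let $k$ be the least index with $B^k\cap A\neq\varnothing$; this index exists because $a_0\in A\cap B^*$ and, by Lemma~\ref{A*=Ak}, the closure $B^*$ is reached in finitely many steps. Since $B^0=B$ and $A\sqcup B$ is a partition, we have $B^0\cap A=\varnothing$, so $k\ge 1$. The key observation is then that $B^{k-1}=B$: the chain $B=B^0\subseteq B^{k-1}$ combined with the minimality of $k$ (so $B^{k-1}\cap A=\varnothing$, i.e. $B^{k-1}\subseteq\mathcal{A}\setminus A=B$) pins $B^{k-1}$ exactly to $B$.

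Next I would choose any $\ell\in B^k\cap A$ to serve as the lighthouse. Since $\ell\in A$, the equality $B^{k-1}=B$ forces $\ell\notin B^{k-1}$, so the recursive clause of Definition~\ref{Ak} that placed $\ell$ into $B^k$ must be the threshold clause, yielding $\|B^{k-1}\|_\ell\ge\theta(\ell)$, that is, $\|B\|_\ell\ge\theta(\ell)$. The hypothesis of the lemma supplies $\|B\|_\ell\le\|C_\ell\|_\ell$, whence $\|C_\ell\|_\ell\ge\theta(\ell)$. Because $C_\ell^0=C_\ell$, this gives $\ell\in C_\ell^1\subseteq C_\ell^*$ by Definitions~\ref{Ak} and~\ref{A*}, and by Definition~\ref{sat} this is exactly $S\vDash C_\ell\rhd\ell$.

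I expect the only delicate step to be the identification $B^{k-1}=B$, which formalizes the intuition that the lighthouse customer adopts purely under pressure from the outside group $B$ and not from any peer inside $A$. Once this is in place, the remaining argument is a direct chain of inequalities $\theta(\ell)\le\|B\|_\ell\le\|C_\ell\|_\ell$ feeding into a single step of the closure operator applied to $C_\ell$.
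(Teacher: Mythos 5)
Your proof is correct and takes essentially the same approach as the paper's: both identify the lighthouse as the first agent to enter the closure of $B$, so that it crosses its threshold under pressure from $B$ alone ($\|B\|_\ell\ge\theta(\ell)$), and then feed $\theta(\ell)\le\|B\|_\ell\le\|C_\ell\|_\ell$ into a single step of the closure of $C_\ell$. The only cosmetic difference is that where you run a minimal-index argument to pin down $B^{k-1}=B$, the paper invokes Lemma~\ref{A1 imp Ak} contrapositively to get $B^1\neq B$ and picks $\ell\in B^1\setminus B$ directly.
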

\begin{proof}
Note that assumption $S\vDash B\rhd a_0$ by Definition~\ref{sat} implies that $a_0\in B^*$. On the other hand, assumption $a_0\in A$ implies that $a_0\notin B$ because $A\sqcup B$ is a partition of set $\mathcal{A}$. Thus, $B^*\neq B$. Hence, by Definition~\ref{A*}, there must exist $k$ such that $B^k\neq B$. Then, $B^1\neq B$ by Lemma~\ref{A1 imp Ak}. Thus, there must exist $\ell \in B^1\setminus B$. Hence, $\|B\|_\ell\ge \theta(\ell)$ by Definition~\ref{Ak}. Then, by the assumption of the lemma, $\|C_\ell\|_\ell\ge \|B\|_\ell\ge \theta(\ell)$. Thus, $\ell \in C^1_\ell$, by Definition~\ref{Ak}. Hence, $\ell\in C^*_\ell$ by Definition~\ref{A*}. Therefore, $S\vDash C_\ell\rhd \ell$ by Definition~\ref{sat}. Finally, note that $\ell\in A$ because $\ell\in B^1\setminus B$ and $A\sqcup B$ is a partition of the set $\mathcal{A}$.
\end{proof}
This concludes the proof of the soundness of our logical system.

\section{Completeness}\label{completeness section}

In this section we proof the completeness of our logical system with respect to the semantics given in Definition~\ref{sat}. This result is formally stated as Theorem~\ref{completeness theorem} in the end of this section. The proof of completeness theorem consists in  the construction of a ``canonical" social network. We start, however, we a few technical lemmas and definitions.

\subsection{Preliminaries}

Let us first prove a useful property of real numbers.
\begin{lemma}\label{x and y}
If $\epsilon>0$ is a real number and $x$ and $y$ are any real numbers such that either $x=y$ or $|x-y|>\epsilon$. Then, $x+\epsilon > y$ implies $x\ge y$.
\end{lemma}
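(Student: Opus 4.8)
The plan is to argue by case analysis on the disjunctive hypothesis ``either $x=y$ or $|x-y|>\epsilon$''. In the first case, $x=y$, the desired conclusion $x\ge y$ holds trivially, and the antecedent $x+\epsilon>y$ is not even needed. So the only substantive case is $|x-y|>\epsilon$.

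In that second case I would assume the antecedent $x+\epsilon>y$ and show $x\ge y$ by excluding the alternative $x<y$. Rewriting the antecedent gives $x-y>-\epsilon$. Suppose toward a contradiction that $x<y$. Then $|x-y|=y-x$, so the hypothesis $|x-y|>\epsilon$ reads $y-x>\epsilon$, that is, $x-y<-\epsilon$. This contradicts $x-y>-\epsilon$. Hence $x<y$ is impossible, and therefore $x\ge y$, which is exactly what we want.

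The argument is entirely elementary, so there is no real obstacle; the statement is a routine fact about real numbers being readied for later use. The only point that deserves a moment's care is the role of the strict gap $|x-y|>\epsilon$ in the second case: it is precisely this gap that upgrades the $\epsilon$-slack inequality $x+\epsilon>y$ into the exact inequality $x\ge y$, by ruling out the intermediate band $-\epsilon<x-y<0$ in which $x$ would lie strictly below $y$ yet still satisfy $x+\epsilon>y$. Keeping the case split clean and tracking the direction of each inequality is all that the write-up requires.
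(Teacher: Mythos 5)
Your proof is correct and follows essentially the same route as the paper: the paper also supposes $y>x$, deduces $x\ne y$, hence $|x-y|>\epsilon$, hence $y-x>\epsilon$, contradicting $x+\epsilon>y$. Your preliminary split into the cases $x=y$ and $|x-y|>\epsilon$ is only a cosmetic rearrangement of that same argument.
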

\begin{proof}
Suppose $y>x$. Hence, $x\ne y$. Thus, $|x-y|>\epsilon$, by the assumption of the lemma. Then, $y-x>\epsilon$, because $y>x$. Therefore, $x+\epsilon<y$.
\end{proof}

We now assume a fixed sociogram $(\mathcal{A},w)$ and a fixed maximal consistent subset $X$ of $\Phi(\mathcal{A})$.
\begin{definition}\label{hat def}
$\widehat{A}=\{a\in \mathcal{A}\;|\; X\vdash A\rhd a\}$ for each subset $A\subseteq \mathcal{A}$.
\end{definition}
Choose $\epsilon$ to be any positive real number such that $\epsilon < \|A\|_a -\|B\|_a$
for each agent $a\in \mathcal{A}$ and each subsets $A,B\subseteq \mathcal{A}$, such that $\|A\|_a > \|B\|_a$. This could be achieved because set $\mathcal{A}$ is finite.

\begin{lemma}\label{A B lemma}
For any subsets $A,B\subseteq \mathcal{A}$ and any agent $a\in\mathcal{A}$ if $\|A\|_a+\epsilon>\|B\|_a$, then $\|A\|_a\ge\|B\|_a$.
\end{lemma}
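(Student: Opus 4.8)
The plan is to reduce this lemma to the abstract real-number statement of Lemma~\ref{x and y} by instantiating $x:=\|A\|_a$ and $y:=\|B\|_a$, with the same $\epsilon$ fixed just before the lemma. Once the dichotomy hypothesis of Lemma~\ref{x and y} is established for this pair, the hypothesis $\|A\|_a+\epsilon>\|B\|_a$ is literally the statement $x+\epsilon>y$, and Lemma~\ref{x and y} delivers $x\ge y$, i.e.\ $\|A\|_a\ge\|B\|_a$, which is exactly the conclusion we want.

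The one genuine step is verifying that the pair $x=\|A\|_a$, $y=\|B\|_a$ satisfies the antecedent of Lemma~\ref{x and y}, namely that either $x=y$ or $|x-y|>\epsilon$. I would argue by cases on whether $\|A\|_a$ and $\|B\|_a$ are equal. If they are equal, the first disjunct holds and we are done. Otherwise $\|A\|_a\neq\|B\|_a$, so one of the two strict inequalities $\|A\|_a>\|B\|_a$ or $\|B\|_a>\|A\|_a$ must hold, and in either case I invoke the defining property of $\epsilon$ to obtain the strict gap $|\|A\|_a-\|B\|_a|>\epsilon$.

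The only subtlety worth flagging is that $\epsilon$ was chosen to satisfy $\epsilon<\|A\|_a-\|B\|_a$ only under the condition $\|A\|_a>\|B\|_a$, i.e.\ the defining inequality is stated one-sidedly. I would point out that this is harmless because the choice quantifies over \emph{all} pairs of subsets $A,B\subseteq\mathcal{A}$: in the case $\|B\|_a>\|A\|_a$ one simply applies the defining property to the swapped pair $(B,A)$, yielding $\epsilon<\|B\|_a-\|A\|_a$. Thus in both directions of inequality the absolute-value gap exceeds $\epsilon$, so the second disjunct of the dichotomy holds whenever the values differ. This is really the whole content; there is no computational obstacle, only the bookkeeping that confirms the hypotheses of Lemma~\ref{x and y} are met before applying it.
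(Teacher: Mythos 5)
Your proposal is correct and follows exactly the paper's route: instantiate Lemma~\ref{x and y} with $x=\|A\|_a$, $y=\|B\|_a$, after observing that the choice of $\epsilon$ guarantees the dichotomy ``$\|A\|_a=\|B\|_a$ or $|\|A\|_a-\|B\|_a|>\epsilon$''. The only difference is that you spell out the swapping of $A$ and $B$ needed to handle the one-sided formulation of the $\epsilon$-condition, a detail the paper compresses into the phrase ``by the choice of $\epsilon$''.
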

\begin{proof}
By the choice of $\epsilon$, we have either $\|A\|_a=\|B\|_a$ or $|(\|A\|_a-\|B\|_a)|>\epsilon$. Thus, $\|A\|_a\ge\|B\|_a$ by Lemma~\ref{x and y}.
\end{proof}

\begin{lemma}\label{A sub Ahat}
$A\subseteq\widehat{A}$ for each subset $A\subseteq\mathcal{A}$.
\end{lemma}
\begin{proof}
Suppose that $a\in A$. Thus, $\vdash A\rhd a$ by Reflexivity axiom. Therefore, $a\in \widehat{A}$ by Definition~\ref{hat def}. 
\end{proof}

\begin{lemma}\label{ArhdAhat}
$X\vdash A\rhd \widehat{A}$, for each subset $A\subseteq \mathcal{A}$.
\end{lemma}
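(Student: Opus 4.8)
The plan is to reduce the claim to a pointwise fact about the elements of $\widehat{A}$ and then glue those facts together. By Lemma~\ref{A*=Ak}-style finiteness (indeed, $\mathcal{A}$ is finite), the set $\widehat{A}$ is finite, so I would enumerate it as $\widehat{A}=\{a_1,\dots,a_n\}$. By the very Definition~\ref{hat def} of $\widehat{A}$, membership $a_i\in\widehat{A}$ means exactly $X\vdash A\rhd a_i$ for each $i$. So the task becomes: combine the $n$ separate derivations of $A\rhd a_i$ into a single derivation of $A\rhd\{a_1,\dots,a_n\}$.

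The key intermediate step I would establish first is a \emph{union rule}: from $X\vdash A\rhd B$ and $X\vdash A\rhd C$ one can derive $X\vdash A\rhd B,C$. This is the standard consequence of the Armstrong axioms, and I would derive it as follows. Applying the Augmentation axiom $A\rhd C\to(A,A\rhd C,A)$ to the premise $A\rhd C$ and using $A\cup A=A$, Modus Ponens yields $X\vdash A\rhd A,C$. Applying the Augmentation axiom $A\rhd B\to(A,C\rhd B,C)$ to the premise $A\rhd B$, Modus Ponens yields $X\vdash A,C\rhd B,C$. Finally, instantiating the Transitivity axiom as
$$
A\rhd (A,C)\to\bigl((A,C)\rhd(B,C)\to A\rhd(B,C)\bigr)
$$
and discharging the two antecedents by Modus Ponens gives $X\vdash A\rhd B,C$, as desired.

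With the union rule in hand, I would prove $X\vdash A\rhd\{a_1,\dots,a_n\}$ by induction on $n$. In the base case $n=0$ we have $\varnothing\subseteq A$, so $X\vdash A\rhd\varnothing$ by the Reflexivity axiom. For the inductive step, the induction hypothesis gives $X\vdash A\rhd\{a_1,\dots,a_k\}$, and since $a_{k+1}\in\widehat{A}$ we have $X\vdash A\rhd a_{k+1}$; the union rule applied to these two facts yields $X\vdash A\rhd\{a_1,\dots,a_{k+1}\}$. Taking $n$ to be $|\widehat{A}|$ then gives $X\vdash A\rhd\widehat{A}$.

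I do not expect a serious obstacle here; the only point requiring care is the bookkeeping in the union-rule derivation, namely choosing the correct augmenting sets so that the unions collapse as intended (in particular the use of $A\cup A=A$) and instantiating Transitivity with the right middle term $A,C$. Everything else is routine propositional reasoning via Modus Ponens, which is the sole inference rule of the system.
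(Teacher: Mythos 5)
Your proposal is correct and follows essentially the same route as the paper: enumerate $\widehat{A}$, note $X\vdash A\rhd a_i$ by Definition~\ref{hat def}, and induct, where your ``union rule'' (Augmentation on each premise followed by Transitivity through the middle term $A,C$) is exactly the derivation the paper performs inline in its induction step. The only difference is cosmetic --- you abstract that three-step Armstrong derivation into a standalone rule before applying it.
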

\begin{proof}
Let $\widehat{A}=\{a_1,\dots,a_n\}$. By the definition of $\widehat{A}$,  $X\vdash A\rhd a_i$, for any $i\le n$. We prove, by induction on $k$, that $X\vdash A\rhd a_1,\dots,a_k$ for each $0\le k\le n$. 

\noindent {\em Base Case}: $X\vdash A\rhd \varnothing$ by Reflexivity axiom.

\noindent {\em Induction Step}: Assume that $X\vdash A\rhd a_1,\dots,a_k$. By Augmentation axiom, 
\begin{equation}\label{eq0}
X\vdash A, a_{k+1}\rhd a_1,\dots,a_k,a_{k+1}.
\end{equation}
Recall that $X\vdash A\rhd a_{k+1}$. Again by Augmentation axiom, $X\vdash A\rhd A, a_{k+1}$.
Hence, $X\vdash A \rhd a_1,\dots,a_k,a_{k+1}$, by (\ref{eq0}) and Transitivity axiom.
\end{proof}

\subsection{Canonical Social Network}

Next, based on the sociogram $(\mathcal{A},w)$ and the maximal consistent set $X$, we define ``canonical" social network $N_X=(\mathcal{A},w,\theta)$. We then proceed to prove the core properties of this network.

\begin{definition}\label{theta def}
$$
\theta(a)=
\begin{cases}
0, & \mbox{ if $X\vdash \varnothing\rhd a$,}\\
\max_{a\notin \widehat{B}}\|\widehat{B}\|_a+\epsilon, & \mbox{ otherwise.}
\end{cases}
$$
\end{definition}
The maximum in the above definition is taken over all subsets $B$ of $\mathcal{A}$ such that $\widehat{B}$ does not contain agent $a$.
\begin{lemma}
Function $\theta(a)$ is well-defined for each $a\in\mathcal{A}$.
\end{lemma}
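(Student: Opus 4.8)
The statement to prove is that $\theta(a)$ from Definition~\ref{theta def} is well-defined for each $a \in \mathcal{A}$. The plan is to verify that the two-case definition actually produces a unique real value in every situation, which reduces to examining the ``otherwise'' case, since the first case trivially yields $0$.

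First I would address the only genuine concern: in the second case the value is $\max_{a \notin \widehat{B}} \|\widehat{B}\|_a + \epsilon$, a maximum taken over the collection of all subsets $B \subseteq \mathcal{A}$ for which $a \notin \widehat{B}$. For a maximum to exist I must check that this collection is nonempty and finite. Finiteness is immediate, since $\mathcal{A}$ is finite by Definition~\ref{social network}, so there are only finitely many subsets $B$ and hence finitely many candidate values $\|\widehat{B}\|_a$. Nonemptiness is the crux: I need at least one $B$ with $a \notin \widehat{B}$. Here I would use that the second case applies precisely when $X \nvdash \varnothing \rhd a$, i.e.\ when $a \notin \widehat{\varnothing}$ by Definition~\ref{hat def}. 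Thus $B = \varnothing$ itself witnesses nonemptiness, so the index set of the maximum is a nonempty finite set of reals and the maximum exists.

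Having established existence of the maximum, I would note that a maximum of a nonempty finite set of real numbers is a single well-defined real number, and adding the fixed positive real $\epsilon$ keeps it real. Since the two cases of the definition are mutually exclusive and jointly exhaustive (they are governed by whether or not $X \vdash \varnothing \rhd a$), exactly one branch applies to each $a$, so $\theta(a)$ is assigned exactly one value. This confirms $\theta$ is a genuine function mapping $\mathcal{A}$ into the reals.

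The main obstacle is really just the nonemptiness check for the maximum in the ``otherwise'' branch; everything else is routine bookkeeping about finiteness and mutual exclusivity of the cases. The key observation that unlocks it is that the ``otherwise'' condition $X \nvdash \varnothing \rhd a$ is exactly the statement $a \notin \widehat{\varnothing}$, so $\varnothing$ is always an eligible choice of $B$ in that branch. I would also remark that, to match the intended role of $\theta$ in the canonical network, one should confirm $\theta(a)$ is nonnegative: this follows since $\|\widehat{B}\|_a = \sum_{b \in \widehat{B}} w(b,a) \ge 0$ by Definition~\ref{norm def} (all weights are nonnegative) and $\epsilon > 0$, so each branch yields a value in $[0,\infty)$ as required by Definition~\ref{social network}.
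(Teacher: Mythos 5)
Your proof is correct and rests on exactly the same key observation as the paper's: the ``otherwise'' condition $X \nvdash \varnothing \rhd a$ means $a \notin \widehat{\varnothing}$ by Definition~\ref{hat def}, so $B=\varnothing$ witnesses that the maximum is taken over a nonempty (and, by finiteness of $\mathcal{A}$, finite) collection. Your additional remarks on finiteness, mutual exclusivity of the cases, and nonnegativity of $\theta$ are routine details the paper leaves implicit, but they do not change the substance of the argument.
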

\begin{proof}
We need to show that if $X\nvdash \varnothing\rhd a$, then there is at least one subset $B\subseteq \mathcal{A}$ such that $a\notin\widehat{B}$. It suffices to show that $a\notin\widehat{\varnothing}$, which is true due to assumption $X\nvdash \varnothing\rhd a$ and Definition~\ref{hat def}.
\end{proof}

\begin{lemma}\label{Cx exists}
For any subset $B\subseteq\mathcal{A}$,
if $a\in \mathcal{A}\setminus B^*$, then there is $C\subseteq \mathcal{A}$ such that $a\notin \widehat{C}$ and $\theta(a)=\|\widehat{C}\|_a+\epsilon$.
\end{lemma}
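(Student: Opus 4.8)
The plan is to split on the two cases in Definition~\ref{theta def}, observe that the desired conclusion is immediate in the ``otherwise'' case by finiteness, and then show that the hypothesis $a\in\mathcal{A}\setminus B^*$ rules out the case $X\vdash\varnothing\rhd a$ entirely. The reason the first case must be excluded is that there it gives $\theta(a)=0$, while any candidate $C$ would satisfy $\|\widehat{C}\|_a+\epsilon\ge\epsilon>0=\theta(a)$; so no set $C$ could realize the required equality, and the lemma would fail. Thus the whole content of the argument is to derive a contradiction from $X\vdash\varnothing\rhd a$ together with $a\notin B^*$.

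First I would handle the ``otherwise'' case, i.e.\ $X\nvdash\varnothing\rhd a$. Here Definition~\ref{theta def} gives $\theta(a)=\max_{a\notin\widehat{B'}}\|\widehat{B'}\|_a+\epsilon$, where the maximum ranges over all $B'\subseteq\mathcal{A}$ with $a\notin\widehat{B'}$. Since $\mathcal{A}$ is finite there are only finitely many distinct sets of the form $\widehat{B'}$, so the maximum is attained at some $C\subseteq\mathcal{A}$ with $a\notin\widehat{C}$ and $\|\widehat{C}\|_a=\max_{a\notin\widehat{B'}}\|\widehat{B'}\|_a$. For this $C$ we get $\theta(a)=\|\widehat{C}\|_a+\epsilon$ and $a\notin\widehat{C}$, which is exactly the claim. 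This part is essentially bookkeeping.

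The main work, and the step I expect to be the obstacle, is showing $X\nvdash\varnothing\rhd a$ under the assumption $a\notin B^*$. I would argue by contradiction: suppose $X\vdash\varnothing\rhd a$. Then $\theta(a)=0$ by Definition~\ref{theta def}. Since $\|\varnothing\|_a=\sum_{b\in\varnothing}w(b,a)=0=\theta(a)$, we have $\|\varnothing^0\|_a\ge\theta(a)$, so $a\in\varnothing^1$ by Definition~\ref{Ak}, and hence $a\in\varnothing^*$ by Definition~\ref{A*}. Now $\varnothing\subseteq B$, so Corollary~\ref{star mono} yields $\varnothing^*\subseteq B^*$, giving $a\in B^*$. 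This contradicts the hypothesis $a\in\mathcal{A}\setminus B^*$. Therefore $X\nvdash\varnothing\rhd a$, we are necessarily in the ``otherwise'' case, and the previous paragraph completes the proof. The only delicate point is recognizing that the definition of the canonical threshold forces every agent with $X\vdash\varnothing\rhd a$ into $\varnothing^*$, which is what links the syntactic assumption $a\notin B^*$ to the semantic case distinction in $\theta$.
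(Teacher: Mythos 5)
Your proposal is correct and takes essentially the same approach as the paper: the case $\theta(a)=0$ (equivalently, $X\vdash\varnothing\rhd a$) is ruled out because a zero threshold forces $a$ into $B^*$, contradicting the hypothesis, and in the remaining case the maximum in Definition~\ref{theta def} is attained by finiteness, yielding the required $C$. The only cosmetic difference is that the paper gets the contradiction directly from $\|B\|_a\ge 0=\theta(a)$, so that $a\in B^1\subseteq B^*$, whereas you detour through $a\in\varnothing^1\subseteq\varnothing^*\subseteq B^*$ using Corollary~\ref{star mono}; both are equally short and valid.
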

\begin{proof}
If $\theta(a)=0$, then, $a\in B^1$ due to Definition~\ref{Ak}. Thus, $a\in B^*$ by Definition~\ref{A*}, which is a contradiction to the assumption $a\in \mathcal{A}\setminus B^*$.

Suppose now that $\theta(a)>0$, thus, by Definition~\ref{theta def}, there is at least one $C\subseteq \mathcal{A}$ such that $a\notin \widehat{C}$ and $\theta(a)=\|\widehat{C}\|_a+\epsilon$.
\end{proof}

\begin{lemma}\label{theta lemma}
If $B\subseteq \mathcal{A}$ and  $a\in\mathcal{A}\setminus\widehat{B}$, then
$\theta(a)>\|\widehat{B}\|_a$.
\end{lemma}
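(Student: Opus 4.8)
The plan is to split on the two cases in Definition~\ref{theta def} that define $\theta(a)$, and to show that the hypothesis $a\in\mathcal{A}\setminus\widehat{B}$ forces us into the second (``otherwise'') case, after which the desired inequality is immediate.

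First I would establish the auxiliary fact that $X\vdash\varnothing\rhd a$ implies $a\in\widehat{C}$ for \emph{every} subset $C\subseteq\mathcal{A}$. Indeed, $\vdash C\rhd\varnothing$ by the Reflexivity axiom (since $\varnothing\subseteq C$), and the Transitivity axiom gives $\vdash C\rhd\varnothing\to(\varnothing\rhd a\to C\rhd a)$; combining these by Modus Ponens with the assumption $X\vdash\varnothing\rhd a$ yields $X\vdash C\rhd a$, that is, $a\in\widehat{C}$ by Definition~\ref{hat def}. Taking the contrapositive, the hypothesis $a\notin\widehat{B}$ implies $X\nvdash\varnothing\rhd a$, so by Definition~\ref{theta def} the threshold $\theta(a)$ is given by its second case, namely $\theta(a)=\max_{a\notin\widehat{C}}\|\widehat{C}\|_a+\epsilon$, where the maximum ranges over all $C\subseteq\mathcal{A}$ with $a\notin\widehat{C}$.

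I would then finish by observing that $B$ itself is one of the sets over which this maximum is taken, precisely because $a\notin\widehat{B}$ by hypothesis. Hence $\max_{a\notin\widehat{C}}\|\widehat{C}\|_a\ge\|\widehat{B}\|_a$, and since $\epsilon>0$ we obtain $\theta(a)=\max_{a\notin\widehat{C}}\|\widehat{C}\|_a+\epsilon\ge\|\widehat{B}\|_a+\epsilon>\|\widehat{B}\|_a$, as required.

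The only genuinely nonroutine step is the first one: one must rule out the degenerate case $\theta(a)=0$ before comparing $\theta(a)$ with $\|\widehat{B}\|_a$, since in that case the desired strict inequality $0>\|\widehat{B}\|_a$ would fail because norms are non-negative. The derivation that $X\vdash\varnothing\rhd a$ propagates $a$ into $\widehat{C}$ for all $C$ — an easy consequence of Reflexivity and Transitivity — is exactly what guarantees that, under the lemma's hypothesis, we never land in that degenerate case.
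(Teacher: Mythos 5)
Your proof is correct and follows essentially the same route as the paper: a case split on Definition~\ref{theta def}, where the degenerate case $X\vdash\varnothing\rhd a$ is ruled out via Reflexivity and Transitivity (the paper derives $X\vdash B\rhd a$ directly as a contradiction, while you state the same derivation for general $C$ and take the contrapositive), and the remaining case follows since $B$ appears among the sets over which the maximum is taken. Your write-up is slightly more explicit than the paper's in the second case, but there is no substantive difference.
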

\begin{proof}
{\em Case I:} $X\vdash\varnothing\rhd a$. Note that $X\vdash B\rhd \varnothing$ by Reflexivity axiom. Thus, $X\vdash B\rhd a$ by Transitivity axiom. Hence, $a\in \widehat{B}$ by Definition~\ref{hat def}, which is a contradiction to the assumption of the lemma.

{\em Case II:} if $X\nvdash\varnothing\rhd a$, then $\theta(a)>\|\widehat{B}\|_a$ by Definition~\ref{theta def}.
\end{proof}

\begin{lemma}\label{hat k}
$(\widehat{B})^k=\widehat{B}$ for each $B\subseteq\mathcal{A}$ and each $k\ge 0$.
\end{lemma}
\begin{proof}
We prove this statement by induction on $k$. If $k=0$, then $(\widehat{B})^k=\widehat{B}$, by Definition~\ref{Ak}.
Note next that by Definition~\ref{Ak}, the induction hypothesis, and Lemma~\ref{theta lemma},
\begin{eqnarray*}
(\widehat{B})^{k+1}&=&(\widehat{B})^k\cup \{a\in\mathcal{A}\;|\; \|(\widehat{B})^k\|_a \ge \theta(a)\}\\
&=& \widehat{B}\cup \{a\in\mathcal{A} \;|\; \|\widehat{B}\|_a \ge \theta(a)\}\\
&=& \widehat{B}\cup \{a\in\mathcal{A}\setminus \widehat{B} \;|\; \|\widehat{B}\|_a \ge \theta(a)\}
= \widehat{B}\cup \varnothing = \widehat{B}.
\end{eqnarray*}
\end{proof}

\begin{lemma}\label{hat star}
$(\widehat{B})^*=\widehat{B}$ for each $B\subseteq\mathcal{A}$.
\end{lemma}
\begin{proof}
By Definition~\ref{A*} and Lemma~\ref{hat k},
$$
(\widehat{B})^* = \bigcup_{k\ge 0}(\widehat{B})^k=\bigcup_{k\ge 0}\widehat{B}=\widehat{B}.
$$
\end{proof}

\begin{lemma}\label{star lemma}
For each $B\subseteq\mathcal{A}$,
if  $a\in B^*$, then $X\vdash B\rhd a$.
\end{lemma}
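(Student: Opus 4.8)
The plan is to show the set inclusion $B^* \subseteq \widehat{B}$, from which the statement follows immediately: if $a \in B^*$, then $a \in \widehat{B}$, which by Definition~\ref{hat def} is exactly $X \vdash B \rhd a$. So the whole task reduces to bounding the star closure of $B$ in the canonical network $N_X$ by the syntactic closure $\widehat{B}$.

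To establish this inclusion, I would exploit the fact that $\widehat{B}$ has already been shown to be a fixed point of the star operation. First, by Lemma~\ref{A sub Ahat} we have $B \subseteq \widehat{B}$. Next, monotonicity of star closure (Corollary~\ref{star mono}) upgrades this to $B^* \subseteq (\widehat{B})^*$. Finally, Lemma~\ref{hat star} tells us that $(\widehat{B})^* = \widehat{B}$. Chaining these three facts gives
$$
B^* \subseteq (\widehat{B})^* = \widehat{B},
$$
which is precisely what is needed.

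I expect there to be no genuine obstacle at this stage, because the real work has been front-loaded into the preceding lemmas. The crux is Lemma~\ref{hat star} (via Lemma~\ref{hat k}), which says that $\widehat{B}$ is closed under one step of diffusion and hence under the full star closure; that lemma in turn depends on the careful choice of threshold values in Definition~\ref{theta def} and on Lemma~\ref{theta lemma}, which guarantees $\theta(a) > \|\widehat{B}\|_a$ for every $a \notin \widehat{B}$. Once those facts are in hand, the present lemma is just a two-line application of monotonicity to an already-known fixed point. Thus the only thing to be careful about is citing the correct earlier results in the correct order, rather than any new combinatorial or logical difficulty.
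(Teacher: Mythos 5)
Your proposal is correct and follows exactly the paper's own argument: both cite Lemma~\ref{A sub Ahat}, Corollary~\ref{star mono}, Lemma~\ref{hat star}, and Definition~\ref{hat def} in the same order, with the only cosmetic difference being that you state the chain as a set inclusion $B^*\subseteq(\widehat{B})^*=\widehat{B}$ while the paper traces a single element $a$ through it.
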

\begin{proof}
Suppose $a\in B^*$. By Lemma~\ref{A sub Ahat}, $B\subseteq \widehat{B}$. Then, $B^*\subseteq (\widehat{B})^*$ Corollary~\ref{star mono}. Thus, $a\in (\widehat{B})^*$. Hence, $a\in \widehat{B}$ by Lemma~\ref{hat star}. Therefore, $X\vdash B\rhd a$ by Definition~\ref{hat def}.
\end{proof}

\begin{lemma}\label{start lemma 2}
For each $B\subseteq\mathcal{A}$ and each $a\in\mathcal{A}$,
if $X\vdash B\rhd a$, then $a\in B^*$.
\end{lemma}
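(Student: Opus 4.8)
The plan is to argue by contraposition: assuming $a\notin B^*$, I will show $X\nvdash B\rhd a$. The engine of the argument is the Lighthouse axiom applied to the partition $(\mathcal{A}\setminus B^*)\sqcup B^*$, since it is the only axiom sensitive to the weight function $w$, and hence the only one able to connect the syntactic judgment $B\rhd a$ to the specific closure $B^*$ of the canonical network.

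First I would set $P=\mathcal{A}\setminus B^*$, so that $a\in P$ and $P\sqcup B^*$ is a partition of $\mathcal{A}$. For each agent $x\in P$ we have $x\notin B^*$, so Lemma~\ref{Cx exists} (applied to the set $B$) supplies a witness set $C_x\subseteq\mathcal{A}$ with $x\notin\widehat{C_x}$ and $\theta(x)=\|\widehat{C_x}\|_x+\epsilon$. The key inequality needed to meet the Lighthouse precondition is $\|B^*\|_x\le\|\widehat{C_x}\|_x$ for every $x\in P$: since $x\notin B^*$, Lemma~\ref{theta>} gives $\theta(x)>\|B^*\|_x$, that is $\|\widehat{C_x}\|_x+\epsilon>\|B^*\|_x$, and then Lemma~\ref{A B lemma} upgrades this strict $\epsilon$-gap to $\|\widehat{C_x}\|_x\ge\|B^*\|_x$. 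With the family $\{\widehat{C_x}\}_{x\in P}$ taken as the Lighthouse sets and $B^*$ as the ``outside'' group, the axiom yields $\bigvee_{x\in P}B^*\rhd x\to\bigvee_{x\in P}\widehat{C_x}\rhd x$.

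Next I would trigger the antecedent of this implication. From the assumption $X\vdash B\rhd a$ together with $B\subseteq B^*$ (Lemma~\ref{A subseteq A*}), Reflexivity gives $X\vdash B^*\rhd B$ and Transitivity then gives $X\vdash B^*\rhd a$; as $a\in P$, this establishes $X\vdash\bigvee_{x\in P}B^*\rhd x$. Modus Ponens produces $X\vdash\bigvee_{x\in P}\widehat{C_x}\rhd x$, and maximality of $X$ forces some single disjunct $X\vdash\widehat{C_x}\rhd x$ for a specific $x\in P$. Composing with $X\vdash C_x\rhd\widehat{C_x}$ from Lemma~\ref{ArhdAhat} via Transitivity gives $X\vdash C_x\rhd x$, i.e.\ $x\in\widehat{C_x}$ by Definition~\ref{hat def}, contradicting $x\notin\widehat{C_x}$. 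Hence the assumption $a\notin B^*$ is untenable and $a\in B^*$.

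I expect the main obstacle to be the correct choice of the Lighthouse data and the verification of its precondition, rather than the final bookkeeping. The subtlety is that one must take the ``outside'' group to be the closure $B^*$ and not $B$ itself: this is what places $a$ in the inside group $P$ and what makes Lemma~\ref{theta>} applicable to each $x\in P$. One must also route the strict threshold comparison through the $\epsilon$-rounding Lemma~\ref{A B lemma} to recover a genuine $\le$ between the two pressures. Once these two points are handled, the remainder is a routine chain of Armstrong-axiom derivations together with the disjunction property of maximal consistent sets.
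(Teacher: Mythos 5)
Your proof is correct and follows essentially the same route as the paper's own argument: the partition $(\mathcal{A}\setminus B^*)\sqcup B^*$, the witness sets $\widehat{C_x}$ from Lemma~\ref{Cx exists}, the precondition check via Lemma~\ref{theta>} and Lemma~\ref{A B lemma}, the derivation of $X\vdash B^*\rhd a$ to trigger the Lighthouse implication, and the final contradiction through maximality, Lemma~\ref{ArhdAhat}, and Definition~\ref{hat def} all match the paper step for step. No gaps.
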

\begin{proof}
By Lemma~\ref{theta>},  $\theta(x)>\|B^*\|_x$ for each $x\in \mathcal{A}\setminus B^*$.
At the same time, by Lemma~\ref{Cx exists}, for each $x\in \mathcal{A}\setminus B^*$ there is $C_x$ such that $x\notin \widehat{C_x}$ and $\theta(x)=\|\widehat{C_x}\|_x+\epsilon$.
Hence, $\|\widehat{C_x}\|_x+\epsilon > \|B^*\|_x$ for each $x\in \mathcal{A}\setminus B^*$.
Thus, by Lemma~\ref{A B lemma}, $\|\widehat{C_x}\|_x\ge  \|B^*\|_x$ for each $x\in \mathcal{A}\setminus B^*$.

Consider partition $(\mathcal{A}\setminus B^*)\sqcup B^*$ of $\mathcal{A}$.
By Lighthouse axiom,
\begin{equation}\label{lighthouse application}
\vdash \bigvee_{x\in \mathcal{A}\setminus B^*}B^*\rhd x\to \bigvee_{x\in \mathcal{A}\setminus B^*}\widehat{C_x}\rhd x.
\end{equation}
Suppose that $a\notin B^*$, 
Lemma~\ref{A subseteq A*} and Reflexivity axiom imply that $\vdash B^*\rhd B$. Thus, by assumption $X\vdash B\rhd a$ and Transitivity axiom, $X\vdash B^*\rhd a$. Hence, statement (\ref{lighthouse application}) implies that
$$
X \vdash \bigvee_{x\in \mathcal{A}\setminus B^*}\widehat{C_x}\rhd x.
$$

Then, due to the maximality of set $X$, there must exist $x_0\in \mathcal{A}\setminus B^*$ such that $X\vdash \widehat{C_{x_0}}\rhd x_0$. Thus, $X\vdash C_{x_0}\rhd x_0$, due to Lemma~\ref{ArhdAhat} and Transitivity axiom:
$$
\vdash C_{x_0}\rhd \widehat{C_{x_0}}\to (\widehat{C_{x_0}}\rhd x_0\to C_{x_0}\rhd x_0).
$$
Hence, $x_0\in \widehat{C_{x_0}}$ by Definition~\ref{hat def}, which is a contradiction with the choice of $C_x$.
\end{proof}

\begin{lemma}\label{main induction}
$N_X\vDash \phi$ if and only if $\phi\in X$, 
for each formula $\phi\in\Phi(\mathcal{A})$.
\end{lemma}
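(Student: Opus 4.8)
The statement is the standard truth lemma for the canonical model, so the plan is to proceed by structural induction on the formula $\phi\in\Phi(\mathcal{A})$, following the three clauses of Definition~\ref{Phi}. The real content lives in the atomic case $\phi = A\rhd B$; the $\bot$ case and the implication case are routine and rely only on the fact that $X$ is a maximal consistent set.

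First I would dispose of the easy cases. For $\phi=\bot$: by Definition~\ref{sat} we have $N_X\nvDash\bot$, and since $X$ is consistent, $\bot\notin X$, so both sides fail and the equivalence holds vacuously. For the implication case $\phi=\psi\to\chi$: the induction hypothesis gives $N_X\vDash\psi \iff \psi\in X$ and $N_X\vDash\chi \iff \chi\in X$. Combining Definition~\ref{sat}(3) (which says $N_X\vDash\psi\to\chi$ iff $N_X\nvDash\psi$ or $N_X\vDash\chi$) with the standard properties of a maximal consistent set (namely $\psi\to\chi\in X$ iff $\psi\notin X$ or $\chi\in X$), the two conditions match up directly.

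The heart of the argument is the base case $\phi = A\rhd B$, and here the plan is to reduce the set statement to single agents and then invoke the two key lemmas already established. By Definition~\ref{sat}(2), $N_X\vDash A\rhd B$ means $B\subseteq A^*$, i.e.\ $a\in A^*$ for every $a\in B$. On the syntactic side, I would observe that $A\rhd B\in X$ is equivalent to $X\vdash A\rhd a$ for every $a\in B$: the forward direction follows because $X\vdash A\rhd a$ is derivable from $A\rhd B$ whenever $a\in B$ (using Reflexivity to get $B\rhd a$ and Transitivity), while the backward direction is exactly the content of Lemma~\ref{ArhdAhat} combined with Augmentation/Transitivity reasoning to assemble the individual $A\rhd a$ into $A\rhd B$ (since $B\subseteq\widehat{A}$). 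Thus both sides reduce to a statement about each individual agent $a\in B$, and the equivalence for a single agent is supplied precisely by the two preceding lemmas: Lemma~\ref{star lemma} gives $a\in A^*\Rightarrow X\vdash A\rhd a$, and Lemma~\ref{start lemma 2} gives $X\vdash A\rhd a\Rightarrow a\in A^*$.

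The main obstacle is not in the structural induction itself but in the legwork already completed in Lemma~\ref{star lemma} and Lemma~\ref{start lemma 2}, especially the latter, whose proof is where the Lighthouse axiom is actually deployed via the partition $(\mathcal{A}\setminus B^*)\sqcup B^*$. For the present lemma, the only genuine care needed is in cleanly reducing the set-valued atom $A\rhd B$ to the family of atoms $\{A\rhd a\}_{a\in B}$ on both the semantic and syntactic sides; once that reduction is in place, the two cited lemmas close the atomic case and the propositional cases are immediate. I would therefore present the proof as a short induction that cites Lemma~\ref{star lemma} and Lemma~\ref{start lemma 2} for the atomic step and appeals to maximal consistency for the connectives.
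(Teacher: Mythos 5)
Your proposal is correct and follows essentially the same route as the paper's proof: structural induction with the routine $\bot$ and implication cases handled via maximal consistency, and the atomic case $A\rhd B$ reduced to per-agent statements that are closed by Lemma~\ref{star lemma} and Lemma~\ref{start lemma 2}, with the individual facts $X\vdash A\rhd a$ reassembled into $X\vdash A\rhd B$ via $\widehat{A}$, Reflexivity, Lemma~\ref{ArhdAhat}, and Transitivity, exactly as in the paper. The only cosmetic difference is that you phrase the assembly step as invoking ``Augmentation/Transitivity'' where the paper uses Reflexivity plus Transitivity (Augmentation appearing only inside the proof of Lemma~\ref{ArhdAhat}), which does not affect correctness.
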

\begin{proof}
We prove this lemma by induction on structural complexity of formula $\phi$. Cases when formula $\phi$ is $\bot$ or has form $\psi_1\to\psi_2$ follow in the standard way from Definition~\ref{sat} and the assumptions of maximality and consistency of set $X$. Suppose that $\phi$ has form $A\rhd B$.  

$(\Rightarrow):$
Suppose that $N_X \vDash A \rhd B$.
Then $ B \subseteq A^* $ by Definition~\ref{sat}. 
Hence, $ b \in A^* $ for each $ b \in B$.
Thus, $ X\vdash A \rhd b$ for each $b \in B$ by
Lemma~\ref{star lemma}.
Hence, $b \in \widehat{A} $ for each $ b \in B$ by  Definition~\ref{hat def}.
In other words, $B\subseteq \widehat{A}$. 
Thus, by Reflexivity axiom, $ \vdash \widehat{A} \rhd B$. On the other hand, $X \vdash A \rhd \widehat{A} $ by Lemma~\ref{ArhdAhat}. 
Therefore,  $ X\vdash A \rhd B$ by Transitivity axiom.

$(\Leftarrow):$
Assume $ X \vdash A \rhd  B$. 
By Reflexivity axiom, $ \vdash B \rhd b$  for every $b \in B $. 
Hence, $ X  \vdash A \rhd b$ for each $ b\in B$ by Transitivity axiom.
Thus,  $b \in A^*$ for each $b \in B$, by Lemma~\ref{start lemma 2}.
In other words, $B \subseteq A^*$.
Therefore, $ N_X \vDash A \rhd B$ by Definition~\ref{sat}.
\end{proof}

\subsection{Main Result}

We are now ready to state and prove the completeness theorem for our logical system with respect to the semantics given in Definition~\ref{sat}.

\begin{theorem}\label{completeness theorem}
For any sociogram $(\mathcal{A},w)$ and any formula $\phi\in\Phi(\mathcal{A})$, if $N\vDash\phi$ for each social network $N$ based on sociogram $(\mathcal{A},w)$, then $\vdash \phi$.
\end{theorem}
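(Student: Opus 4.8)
The plan is to prove completeness by contraposition: assuming $\nvdash_S \phi$, I will construct a single social network $N$ based on the sociogram $(\mathcal{A},w)$ such that $N\nvDash\phi$. Since $\phi$ is not derivable, the singleton $\{\neg\phi\}$ is consistent, and by the standard Lindenbaum argument it extends to a maximal consistent set $X\subseteq\Phi(\mathcal{A})$ with $\phi\notin X$ (equivalently $\neg\phi\in X$). This $X$ is precisely the maximal consistent set fixed in the preliminaries, so I can immediately invoke the canonical network $N_X=(\mathcal{A},w,\theta)$ built in Definition~\ref{theta def}.

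The heart of the argument is the truth lemma, Lemma~\ref{main induction}, which asserts $N_X\vDash\psi$ if and only if $\psi\in X$ for every $\psi\in\Phi(\mathcal{A})$. Applying this with $\psi=\phi$ and using $\phi\notin X$ gives $N_X\nvDash\phi$. Since $N_X$ is based on the sociogram $(\mathcal{A},w)$ by construction, this exhibits a witnessing social network, contradicting the hypothesis that $N\vDash\phi$ for all networks $N$ based on $(\mathcal{A},w)$. Hence $\vdash\phi$, as required.

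Almost all the real work is already discharged by the lemmas in the Completeness section, so the theorem itself is a short wrap-up. The only genuinely new ingredient needed is the Lindenbaum step producing the maximal consistent $X$ containing $\neg\phi$; this is routine given that $\Phi(\mathcal{A})$ is built from finitely many atomic formulas $A\rhd B$ (with $A,B\subseteq\mathcal{A}$) over a finite agent set $\mathcal{A}$, so one may simply enumerate formulas and greedily extend while preserving consistency. I would therefore structure the proof as: (i) suppose $\nvdash\phi$; (ii) extend $\{\neg\phi\}$ to a maximal consistent $X$; (iii) form $N_X$; (iv) apply Lemma~\ref{main induction} to conclude $N_X\nvDash\phi$; (v) observe $N_X$ is based on $(\mathcal{A},w)$, contradicting the hypothesis.

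The step I expect to carry the conceptual weight is not in this theorem but in Lemma~\ref{main induction}, and specifically in its $(\Leftarrow)$ direction, which rests on Lemma~\ref{start lemma 2}. That is the place where the Lighthouse axiom is actually consumed: to show $X\vdash B\rhd a$ forces $a\in B^*$, one assumes $a\notin B^*$, chooses for each $x\in\mathcal{A}\setminus B^*$ a set $C_x$ realizing the threshold via Lemma~\ref{Cx exists}, verifies the pressure inequality $\|\widehat{C_x}\|_x\ge\|B^*\|_x$ using the $\epsilon$-gap Lemma~\ref{A B lemma}, and then feeds the partition $(\mathcal{A}\setminus B^*)\sqcup B^*$ into the Lighthouse axiom to derive a contradiction. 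For the theorem statement itself, however, the delicate calibration of $\theta$ and $\epsilon$ is entirely encapsulated in the cited lemmas, so the final assembly is mechanical.
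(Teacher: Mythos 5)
Your proof is correct and follows exactly the same route as the paper's: contraposition, Lindenbaum extension of $\{\neg\phi\}$ to a maximal consistent set $X$ with $\phi\notin X$, and application of the truth lemma (Lemma~\ref{main induction}) to the canonical network $N_X$, which is based on $(\mathcal{A},w)$ by construction. The paper's own proof is simply a terser rendering of your steps (i)--(v), and your remark that the Lighthouse axiom is consumed inside Lemma~\ref{start lemma 2} matches the paper's structure as well.
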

\begin{proof}
Suppose that $\nvdash \phi$. Let $X$ be a maximal consistent subset of $\Phi(\mathcal{A})$ such that $\phi\notin X$. By Lemma~\ref{main induction}, $N_X\nvDash \phi$.
\end{proof}

\section{Decidability}

In this section we discuss decidability of our logical system for any fixed sociogram $(\mathcal{A},w)$. Note that we allow arbitrary real numbers as subscripts in formula $A\rhd_c B$. Thus, the set of all formulas $\Phi(\mathcal{A})$ is uncountable and its elements can not be used as inputs of a Turing machine. In order to avoid this issue, in this section we modify Definition~\ref{Phi}, Definition~\ref{social network}, and Definition~\ref{socigram} by assuming that only rational numbers could be used as subscripts in our atomic formulas $A\rhd_c B$, as influence values, and as threshold values. It is easy to see that the above proof of completeness is still valid. From this change point of view, the only non-trivial place is the choice of $\epsilon$ for the given sociogram $(\mathcal{A},w)$ that we have made right after Definition~\ref{hat def}. Note, however, that the required $\epsilon$ could always be choose to be a rational number because 0 is a limit point of the set of positive rational numbers.

\begin{theorem}
For any given sociogram $S=(\mathcal{A},w)$, set $\{\phi\in\Phi(\mathcal{A}) \;|\; \vdash_S\phi\}$ is decidable.
\end{theorem}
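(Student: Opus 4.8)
The plan is to reduce the syntactic question ``$\vdash_S\phi$?'' to a finite semantic search, leveraging the results already proved. By the soundness of the system (established in Section~\ref{soundness section}) and by Theorem~\ref{completeness theorem}, we have $\vdash_S\phi$ if and only if $N\vDash\phi$ for every social network $N=(\mathcal{A},w,\theta)$ based on the fixed sociogram $S=(\mathcal{A},w)$. The difficulty is that there are infinitely many such networks, one for each choice of threshold function $\theta$. The crux of the argument is the observation that only finitely many of these networks are genuinely distinct as far as the semantics of $\rhd$ is concerned.

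First I would note that for each agent $x\in\mathcal{A}$ the set of possible peer pressures $V_x=\{\|C\|_x \mid C\subseteq\mathcal{A}\}$ is finite, since $\mathcal{A}$ has only finitely many subsets. By Definition~\ref{sat} the truth value of any atomic formula $A\rhd B$ in $N$ is determined by the star closures $A^*$, and by Definition~\ref{Ak} the sets $A^k$ depend on $\theta$ only through the family of comparisons ``$\|C\|_x\ge\theta(x)$'' ranging over $C\subseteq\mathcal{A}$ and $x\in\mathcal{A}$. I would therefore call two threshold functions $\theta_1,\theta_2$ \emph{equivalent} when $\|C\|_x\ge\theta_1(x)\iff \|C\|_x\ge\theta_2(x)$ for all $C\subseteq\mathcal{A}$ and all $x\in\mathcal{A}$. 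A short induction on $k$ (using Definition~\ref{Ak}, with the inductive set $A^k$ playing the role of $C$ in the comparison) then shows that equivalent threshold functions yield identical sets $A^k$ for every $A$ and $k$, hence identical $A^*$ by Lemma~\ref{A*=Ak}, and so the two networks agree on the truth value of every formula of $\Phi(\mathcal{A})$.

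Next I would show that there are only finitely many equivalence classes and that one representative of each is effectively constructible. For a fixed $x$, write the finite set $V_x$ as $0=v_1<\dots<v_{m}$; the monotone predicate $v\ge\theta(x)$ is completely determined by how many of the $v_i$ lie strictly below $\theta(x)$, so for agent $x$ there are at most $m+1$ possible comparison patterns, each realized by an explicit rational threshold (for instance $\theta(x)=0$, a midpoint $(v_j+v_{j+1})/2$, or $v_{m}+1$). Since the thresholds at distinct agents are chosen independently, the finitely many combinations of per-agent patterns enumerate one representative network $N$ from each equivalence class, and each such $N$ has rational data. The decision procedure is then: enumerate these representatives; for each, compute the needed closures $A^*$ by iterating Definition~\ref{Ak} to its fixed point (the iteration stabilizes after at most $|\mathcal{A}|$ steps, cf.\ Lemma~\ref{A*=Ak}); evaluate the Boolean structure of $\phi$ via Definition~\ref{sat}; and accept iff every representative satisfies $\phi$. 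Correctness is immediate from the equivalence-class argument together with soundness and completeness.

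The main obstacle is the equivalence lemma of the second paragraph: one must argue carefully that the semantics of $\rhd$, and thus of all of $\Phi(\mathcal{A})$, factors through the finite comparison data rather than through the exact numerical values of $\theta$. The induction itself is brief, but it is the conceptual heart of the proof, since it is what collapses the uncountable space of threshold assignments to a finite, effectively searchable set. The remaining effectivity claims---finiteness of each $V_x$, rational realizability of every comparison pattern, and termination of the closure iteration---are then routine, given the finiteness of $\mathcal{A}$ and the rationality assumption placed on $w$ at the start of this section.
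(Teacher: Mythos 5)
Your proof is correct, but it reaches the finite search space by a genuinely different route than the paper. The paper re-reads its own completeness proof: the canonical countermodel $N_X$ constructed there has threshold values confined to the finite set $\{0\}\cup\{\|A\|_a+\epsilon \mid a\in\mathcal{A},\ A\subseteq\mathcal{A}\}$ (with $\epsilon$ depending only on $S$), so the class $C(S)$ of networks with thresholds drawn from that set already contains a countermodel whenever $\nvdash_S\phi$, and the decision procedure checks exactly those networks. You instead use soundness and Theorem~\ref{completeness theorem} purely as a black box (quantifying over \emph{all} networks based on $S$) and then collapse that uncountable family by a semantic quotient: two threshold functions are identified when they induce the same comparison pattern $\|C\|_x\ge\theta(x)$, your induction on $k$ shows the sets $A^k$, hence $A^*$, hence all truth values factor through this pattern, and each of the finitely many patterns is realized by an explicit rational representative (zero, midpoints, or $v_m+1$). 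Your equivalence lemma is sound --- Definition~\ref{Ak} indeed consults $\theta$ only through such comparisons with $C=A^k$ --- and the monotone-cutoff count of patterns per agent is right. What your approach buys is independence from the internals of the completeness proof: it would survive any reproof of completeness, and the discretization lemma is of independent interest, showing the semantics over a fixed sociogram admits only finitely many distinct behaviors. What the paper's approach buys is economy: no new lemma is needed, only the observation that the already-constructed canonical models live in a finite, effectively enumerable class. Both arguments ultimately rest on the same underlying fact, that the diffusion dynamics sees $\theta$ only through finitely many threshold comparisons, but they package it differently.
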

\begin{proof}
According to Theorem~\ref{completeness theorem}, $\vdash_S\phi$ if and only if formula $\phi$ is true for each social network $(\mathcal{A},w,\theta)$ based on sociogram $S$. This, of course, does not imply the decidability because there are infinitely many social networks based on sociogram $S$. However, it turns out that the proof of Theorem~\ref{completeness theorem} that we gave above actually shows a stronger result: $\vdash_S\phi$ if and only if formula $\phi$ is true for each social network from a specific finite class $C(S)$ of networks based on sociogram $S$. 

Once existence of such {\em finite} class of social networks $C(S)$ is establish, we should be able to claim the decidability result because one can always verify if a formula $\phi$ is true for each out of {\em finitely} many given networks.

We are now ready to describe finite class of social networks $C(S)$. The social network over sociogram $S$ is completely defined by specifying threshold function $\theta$. In the proof of Theorem~\ref{completeness theorem}, this is done in Definition~\ref{theta def}. This definition depends on $\epsilon$ and maximal consistent set of formulas $X$. Note however that the choice of $\epsilon$ does not depend on $X$ and could be made based on sociogram $S$ alone. Once $\epsilon$ is fixed, the set of all values of function $\theta$, as specified in Definition~\ref{theta def}, belongs to {\em finite} set
$$\{0\}\cup \{\|A\|_a+\epsilon\;|\; a\in\mathcal{A}, A\subseteq \mathcal{A}\}.$$
The set of all social networks over sociogram $S$ whose threshold functions use only values from the above set is the desired finite class of social networks $C(S)$.
\end{proof}

\section{Conclusion}\label{conclusion section}

In this article we have studied properties of influence common to all social networks with the same weighted sociogram. We described all such properties in the propositional language by introducing a logical system for reasoning about these properties and proving soundness and completeness of this system. We have established that the logical system is decidable if its syntax and semantics are restricted to rational numbers.

A natural extension of this work is to consider common influence properties of social network with the same {\em unweighted} sociogram, in which presence of a directed edge between two agents means that one agent might have non-zero influence on the other agent. Absence of a directed edge means that the influence of one agent on the other is zero.

Although an unweighted sociogram is a simpler structure than weighted sociogram, there are some very non-trivial properties of influence relation common to all social networks with the same unweighted sociogram. 

\begin{figure}[ht]
\begin{center}
%\vspace{3mm}
\scalebox{0.5}{\includegraphics{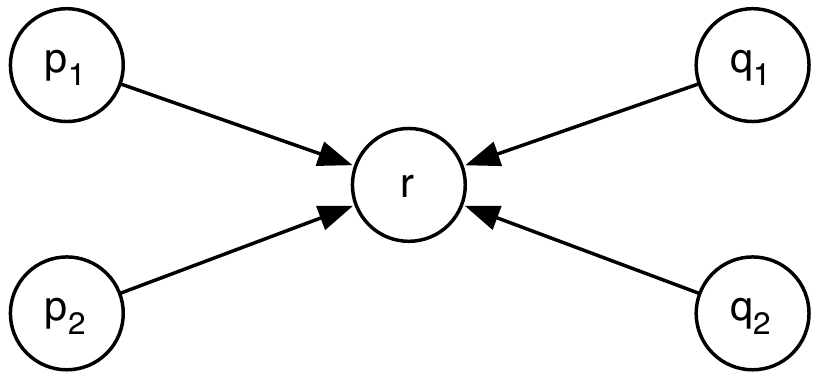}}
\vspace{0mm}
%\footnotesize
\caption{Unweighted Sociogram $U_1$
}\label{conclusion-example-1-fig}
\vspace{0cm}
\end{center}
%\vspace{-2mm}
\end{figure}

Consider, for example, unweighted sociogram $U_1$ depicted in Figure~\ref{conclusion-example-1-fig}. Let $N=(\mathcal{A},w,\theta)$ be a social network based on $U_1$. Furthermore, assume that in social network $N$ (i) neither of the agents $p_1,p_2,q_1,q_2$ is an early adopter, (ii) $N\vDash p_1,p_2\rhd r$, and (iii) $N\vDash q_1,q_2\rhd r$. Thus, $w(p_1,r)+w(p_2,r)\ge \theta(r)$ and $w(q_1,r)+w(q_2,r)\ge \theta(r)$. The first inequality implies that at least one out of $w(p_1,r)$ and $w(p_2,r)$ is greater or equal than $\theta(r)/2$. In other words, there is $i\in\{1,2\}$ such that $w(p_i,r)\ge \theta/2$. Similarly, the second inequality implies that there is $j\in\{1,2\}$ such that $w(q_j,r)\ge \theta/2$. Thus,
$$
\|\{p_i,q_j\}\|_r = w(p_i,r) + w(q_j,) \ge \theta/2 + \theta/2 =\theta.
$$
Hence, $r\in \{p_i,q_j\}^1\subseteq \{p_i,q_j\}^*$. Then, $N\vDash p_i,q_j\rhd r$. So, we have shown that for any social network $N$ based on unweighted sociogram $U_1$ and satisfying the conditions (i), (ii), (iii), there are $i,j\in\{1,2\}$ such that $N\vDash p_i,q_j\rhd r$. This could be formally stated as
\begin{eqnarray*}
&U_2\vDash& p_1 , p_2 \rhd r \wedge q_1 , q_2 \rhd r\\
&& \to \bigvee_{i=1}^2 \bigvee_{j=1}^2 p_i, q_j \rhd r \vee \bigvee_{x\in\{p_1,p_2,q_1,q_2\}} \varnothing \rhd x,
\end{eqnarray*}
where disjunction $\bigvee_{x\in\{p_1,p_2,q_1,q_2\}} \varnothing \rhd x$ captures the statement that one of agents $p_1,p_2,q_1,q_2$ is an early adopter. The above principle is just an example of a non-trivial property of diffusion  common to all social networks with the same unweighted sociogram. This example can be stated in a more general form as
\begin{eqnarray*}
&U_2\vDash & \bigwedge_{i=1}^np_{i1},p_{i2},\dots,p_{in}\rhd q\\ 
&& \to \bigvee_{j_1=1}^n\bigvee_{j_2=1}^n\dots \bigvee_{j_n=1}^n p_{1 j_1},p_{2 j_2},\dots,p_{n j_n}\rhd q\\
&&  \;\;\;\;\; \vee \bigvee_{i=1}^n\bigvee_{j=1}^n \varnothing \rhd p_{ij},
\end{eqnarray*}
where $U_2$ is unweighted sociogram depicted in Figure~\ref{conclusion-example-2-fig}.
\begin{figure}[ht]
\begin{center}
\vspace{3mm}
\scalebox{0.5}{\includegraphics{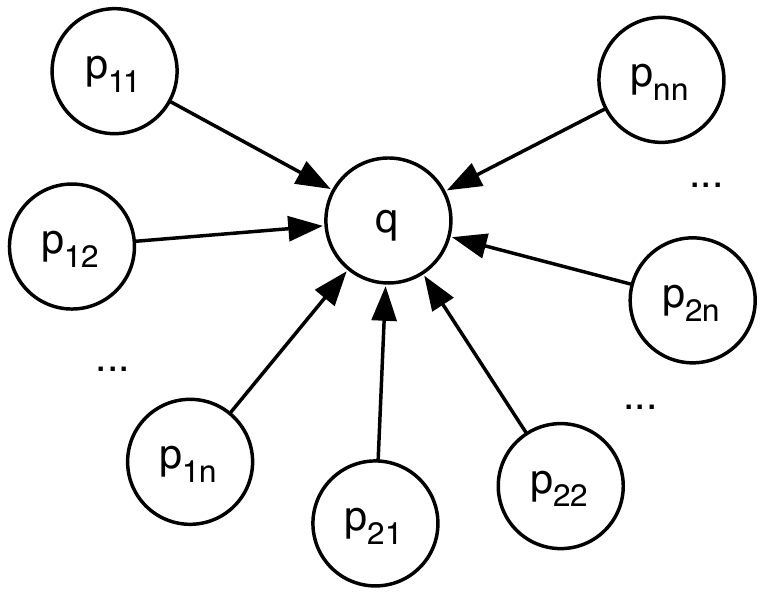}}
\vspace{0mm}
%\footnotesize
\caption{Unweighted Sociogram $U_2$
}\label{conclusion-example-2-fig}
\vspace{0cm}
\end{center}
\vspace{-2mm}
\end{figure}
Complete axiomatization of properties of influence common to all social networks with a given unweighted sociogram remains an open problem.

\bibliography{sp}

\end{document}